\newtheorem{thm}{Theorem}[section]
\newtheorem{lemma}[thm]{Lemma}
\newtheorem{remark}{Remark}[section]
\newtheorem{theorem}[thm]{Theorem}
\newtheorem{proposition}[thm]{Proposition}
\markboth{\rm }{\rm }
\begin{document}

\title{{Global bifurcation and stability of steady states for a bacterial colony model with density-suppressed motility }}

\author{ Manjun Ma    \thanks{%
Department of Mathematics, School of Sciences, Zhejiang Sci-tech University,
Hangzhou, Zhejiang, 310018, China; mjunm9@zstu.edu.cn }
\and Peng Xia  \thanks{%
Department of Mathematics, School of Sciences, Zhejiang Sci-tech University,
Hangzhou, Zhejiang, 310018, China; 624598686@qq.com
}  \and Qifeng Zhang\thanks{%
Department of Mathematics, School of Sciences, Zhejiang Sci-tech University,
Hangzhou, Zhejiang, 310018, China; zhangqifeng0504@163.com}\and Matti Vuorinen\thanks{%
Department of Mathematics and Statistics, University of Turku, FIN-20014 Turku, Finland; vuorinen@utu.fi
}}

\maketitle
\centerline{\small FILE: \jobname.tex}
\begin{quote}
\textbf{Abstract}: We investigate the structure and stability of the steady states for a bacterial colony model with density-suppressed motility. We treat the growth rate of bacteria as a bifurcation parameter to explore the local and global structure of the steady states. Relying on asymptotic analysis and the theory of Fredholm solvability, we derive the second-order approximate expression of the steady states. We analytically establish the stability criterion of the bifurcation solutions, and show that sufficiently large growth rate of bacteria leads to a stable uniform steady state. While the growth rate of bacteria is less than some certain value, there is pattern formation with the admissible wave mode. All the analytical results are corroborated by numerical simulations from different stages.

\indent \textbf{Keywords}: Density-suppressed motility, reaction-diffusion model, global bifurcation, stability analysis

\indent \textbf{MR Subject Classification}: 35K55, 35K45, 35K57.
\end{quote}
\numberwithin{equation}{section}

\section{Introduction and preliminaries}\label{sec1}
The following nonlinear reaction-diffusion system
\begin{eqnarray}\label{model}
\left\{ \begin{array}{ccc}
\begin{aligned}
	&u_t=\Delta(r(v)u)+\sigma u(1-u), & x\in \Omega, \ t>0, \\
	&v_t = D \Delta v - v + u, & x\in \Omega, \ t>0
\end{aligned}
\end{array} \right.
\end{eqnarray}
 was first introduced in \cite{xf} to describe the dynamical behavior of the bacterial species \emph{Vibrio Fischeri}'s  colonies. Here $\Omega$ is a bounded open domain in $\mathbb{R}^n$, $n\geq 1$ is a positive integer, and $\Delta=\sum_{i=1}^n\frac{\partial^2}{\partial x_i^2}$.
The quantities $u(x,t)$ and $v(x,t)$ stand for the density of bacteria  and acy-homoserine lactone (AHL) secreted by \emph{Vibrio Fischeri}, respectively. The positive constants $\sigma$ and $D$ measure, respectively, the logistic growth rate of the bacteria and the diffusion rate of AHL. The diffusion rate of bacteria is state-dependent on $v$ modeled by the positive motility function $r(v)$. As stated in \cite{hyw}, the model (\ref{model}) can be transformed to
\begin{eqnarray}\label{model0}
\left\{ \begin{array}{ccc}
\begin{aligned}
	&u_t=\nabla\cdot (r(v)\nabla u+ur^\prime(v)\nabla v)+\sigma u(1-u), & x\in \Omega, \ t>0, \\
	&v_t = D \Delta v - v + u, & x\in \Omega, \ t>0,
\end{aligned}
\end{array} \right.
\end{eqnarray}
which is a chemotaxis model of Keller-Segel type proposed in \cite{ks1} if the cells/ bacteria do not sense the concentration between receptors, more details can be found in \cite{ks1} and \cite{ks2}.

At the present, for the model (\ref{model}), the rigorous mathematical results are very limited.  When $\sigma=0$ (namely bacteria have no growth), the existence of global solutions were obtained in \cite{ytw, cyk}, the metastability of non-constant steady states was discussed in \cite{xhtm}.
For the case where $\sigma>0$, the mechanism of stripe formation of (\ref{model}) was analyzed in \cite{xf} when $r(v)$ is a piecewise decreasing function. In \cite{js}, authors discussed the pattern solutions and their stability when the diffusion rate of $u$ has a drop at some critical AHL concentration, that is, $r(v)$ is a step function. The \emph{apriori } $L^\infty-$ bound, the global existence of classical solutions, the non-existence of pattern solutions and the numerical results of pattern formation and wave propagation were established in \cite{hyw} when the system (\ref{model}) is located in a two-dimensional bounded domain with zero Neumann boundary conditions and some conditions are imposed on the motility function $r(v)$. In \cite{mwp}, authors investigated the boundedness, existence and non-existence of non-constant positive classical solutions to the stationary problem of the model (\ref{model}), that is
\begin{eqnarray}\label{model1}
\left\{ \begin{array}{ccc}
\begin{aligned}
	&\Delta(r(v)u)+\sigma u(1-u)=0, & x\in \Omega, \\
	&D\Delta v - v + u=0, & x\in\Omega,\\
    &\nabla u\cdot \nu =\nabla v\cdot \nu = 0, &x\in \partial \Omega,
\end{aligned}
\end{array} \right.
\end{eqnarray}
where $\nu$ is the outward unit normal vector on $\partial\Omega$ and the domain $\Omega\subset \mathbb{R}^3$ is bounded and has smooth boundary. The boundary condition  means that there is no flux of either bacteria or AHL across the boundary of the domain. Under the condition that the motility function $r(v)$ satisfies
\begin{equation} \label{con1}
r(v)\in C^2([0, \infty)),  \  r(v)>0  \  \text{and} \  r^\prime(v)<0  \ \text{for} \ v\in [0, \infty),   \quad  \lim_{v\rightarrow +\infty}r(v)=0,
\end{equation}
in \cite{mwp} there is the following result:
\begin{lemma} \label{bounded1}
Let $\Omega$ be a bounded domain in $\mathbb{R}^n(1 \leq n\leq 3)$ with smooth boundary.  Then for any given constant $D_0>0$, there exists a positive constant $B>1$, which depends only on $D_0$ and $\Omega$,  such that any positive solution $(u,v)$ of (\ref{model1}) satisfies
 \begin{equation} \label{result1}
(u(x), v(x))\in\mathbb{ B}=\{(u,v):  \frac{1}{B}\leq  u, v \leq  B\} \   \    \text{for}   \   \    x\in \overline{\Omega}
 \end{equation}
provided that $D\geq D_0$. Furthermore, if $\lim\inf_{v\rightarrow \infty}r(v)v\in (r(0), \infty)$, such a constant $B$ is independent of $D_0$ and $\Omega$.
\end{lemma}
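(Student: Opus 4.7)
The plan is to derive the $L^\infty$ upper bounds by combining an auxiliary variable, the maximum principle, and standard elliptic estimates, and then to obtain the strictly positive lower bounds via Harnack's inequality together with an integral identity that forces $u$ to be nontrivial.

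\textbf{Upper bounds.} First I would integrate the first equation of (\ref{model1}) over $\Omega$ and use the Neumann boundary condition $r(v)\nabla u\cdot\nu+ur'(v)\nabla v\cdot\nu=0$ on $\partial\Omega$ to obtain $\int_\Omega u(1-u)\,dx=0$, whence $\|u\|_{L^1(\Omega)}\le|\Omega|$ by Cauchy--Schwarz; an analogous integration of the second equation gives $\int_\Omega v\,dx=\int_\Omega u\,dx$. The crucial step is to introduce $w:=r(v)u$, which satisfies $-\Delta w=\sigma u(1-u)$ with homogeneous Neumann data. At a point $x_0$ where $w$ attains its maximum over $\overline\Omega$, the inequality $\Delta w(x_0)\le0$ forces $u(x_0)\le1$, so $\max_{\overline\Omega}w\le r(v(x_0))\le r(0)$ by the monotonicity of $r$ from (\ref{con1}). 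The same argument applied to $-D\Delta v+v=u$ yields $\max v\le\max u$, hence the key algebraic relation $\max u\cdot r(\max u)\le r(0)$. Under the extra hypothesis $\liminf_{v\to\infty}r(v)v>r(0)$ this relation alone bounds $\max u$, and consequently $\max v$, by a constant depending only on $r$, which explains the independence of $B$ from $D_0$ and $\Omega$ stated in the last sentence of the lemma.

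\textbf{General case and lower bounds.} When $r(v)v$ is not eventually larger than $r(0)$, the above algebraic bound is vacuous, and I would instead view $-D\Delta v+v=u$ as a linear elliptic problem with $D\ge D_0$ and apply $W^{2,p}$ Calder\'on--Zygmund estimates, whose constants depend only on $D_0$ and $\Omega$, to control $\|v\|_\infty$ in terms of $\|u\|_{L^p}$. A bootstrap that starts from $\|u\|_{L^1}\le|\Omega|$ and uses $u=w/r(v)\le r(0)/r(\|v\|_\infty)$ closes the estimates and produces a constant $B=B(D_0,\Omega)$. Once $u$ and $v$ are bounded in $L^\infty$, elliptic regularity upgrades $v$ to $C^{1,\alpha}(\overline\Omega)$, so the equation for $u$ written in the Keller--Segel form (\ref{model0}) is a linear elliptic equation with bounded coefficients; Harnack's inequality then gives $\max u\le C\min u$, and since $\int u=\int u^2$ forces $\max u\ge1$, this yields $\min u\ge1/C$. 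Comparison with the constant subsolution $1/C$ in $-D\Delta v+v=u\ge1/C$ finally delivers the required pointwise lower bound on $v$.

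\textbf{Main obstacle.} The delicate point is the $L^\infty$ bound on $v$ in the general case uniformly for $D\ge D_0$: a naive Green's function argument for $-D\Delta+I$ degenerates as $D\to0$, and the nonlinear coupling $u=w/r(v)$ prevents a direct $L^\infty$ bound on $u$ being read off from $\|w\|_\infty\le r(0)$ alone. The hypothesis $D\ge D_0$ is precisely what keeps the Calder\'on--Zygmund constants of $-D\Delta+I$ uniform in $D$, allowing the bootstrap to close with $B$ depending on $D_0$ and $\Omega$ but not on $D$; the stronger assumption $\liminf_{v\to\infty}r(v)v>r(0)$ bypasses the bootstrap entirely by converting the purely algebraic inequality $\max u\cdot r(\max u)\le r(0)$ into an a priori bound in a single step, which is why $B$ becomes universal in that regime.
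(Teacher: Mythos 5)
First, a point of comparison: the paper does not prove this statement at all. Lemma \ref{bounded1} is imported verbatim from the reference \cite{mwp} (Ma--Wang--Peng) and is used here only as a black box, so there is no in-paper proof to measure your argument against; I can only assess your proposal on its own terms and against the standard argument for results of this type. On that basis, your upper-bound argument is essentially the right one and is, in fact, the same device used in the cited source: the auxiliary function $w=r(v)u$ satisfies $-\Delta w=\sigma u(1-u)$ with $\nabla w\cdot\nu=0$ (both normal derivatives vanish, so the Neumann condition does pass to $w$), the maximum principle at an interior maximum gives $u(x_0)\le 1$ and hence $\|w\|_\infty\le r(0)$, the second equation gives $\max v\le\max u$, and the chain $r(\max u)\max u\le r(v(x_1))u(x_1)\le\|w\|_\infty\le r(0)$ yields the algebraic inequality that closes the estimate under the hypothesis $\liminf_{v\to\infty}r(v)v>r(0)$. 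Two technical points you should make explicit: a maximum attained on $\partial\Omega$ must be handled by the Hopf boundary point lemma combined with $\nabla w\cdot\nu=0$ (otherwise $\Delta w(x_0)\le 0$ is not available), and in the general case the $W^{2,2}$ bound on $v$ should start from $\|u\|_{L^2}^2=\int u\le|\Omega|$ (which you have for free from $\int u=\int u^2$) rather than from $\|u\|_{L^1}$ alone, using $n\le 3$ so that $H^2\hookrightarrow L^\infty$.

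The genuine gap is in the lower bound. Your Harnack-based route does produce $\min u\ge 1/C$ and is adequate for the main clause, but the Harnack constant unavoidably depends on $\Omega$ (and, through the ellipticity ratio $r(\|v\|_\infty)^{-1}r(0)$ and the coefficient bounds, on $D_0$), so it cannot deliver the final sentence of the lemma, which asserts that under $\liminf_{v\to\infty}r(v)v>r(0)$ the constant $B$ is independent of $D_0$ \emph{and} $\Omega$ --- and $B$ controls the lower bound as well as the upper one. The fix is to run your own maximum-principle trick in the opposite direction: at a minimum point $x_2$ of $w$ one gets $\Delta w(x_2)\ge 0$, hence $u(x_2)\ge 1$ and $\min w\ge r(v(x_2))\ge r(\max v)\ge r(\max u)$; evaluating $w$ at the minimum point of $u$ then gives $\min u\ge r(\max u)/r(0)$, and $\min v\ge\min u$ follows from the second equation exactly as in your last step. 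This replaces the Harnack machinery by a purely pointwise argument whose constants depend only on $r$ and the already-established upper bound on $\max u$, which is what the ``furthermore'' clause requires. Your argument also tacitly needs Harnack up to the boundary for a Neumann problem, which requires a reflection or boundary-Harnack justification; the minimum-principle route avoids that entirely.
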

Since $r(v)\rightarrow 0$ as $v\rightarrow \infty$, by Lemma \ref{bounded1}, the case of degeneracy will not happen here. Due to the assumption $ r^\prime(v)<0 $, we call AHL concentration being of the repressive effect on bacterium motility.

 In order to further present the preliminaries, we now give some notations. Let $W^{m,p}(\Omega,\mathbb{R}^N)$ for $m\geq 1, \ 1<p<+\infty$ be the Sobolev space of $\mathbb{R}^N$- valued
functions with norm $\|\cdot\|_{m,p}$. When $p=2$, $W^{m,2}(\Omega,\mathbb{R}%
^N)$ is written as $H^m(\Omega)$. Let $L^p(\Omega) (1\leq p\leq \infty)$
denote the usual Lebesgue space in a bounded domain $\Omega \subset \mathbb{R%
}^n$ with norm $\|f\|_{p}=\Big(\int_{\Omega} |f(x)|^pdx\Big)^{1/p} $ for $%
1\leq p <\infty$ and $\|f\|_{\infty}=\mathrm{ess}\sup\limits_{x \in
\Omega}|f(x)|$. When $p\in (n, +\infty)$, $W^{1,p}(\Omega,\mathbb{R}^2)
\hookrightarrow C(\Omega,\mathbb{R}^2 )$ which is the space of $\mathbb{R}^2$-valued continuous functions.

The following properties of the negative Laplacian operator $-\Delta$ with zero Neumann boundary condition on $\Omega$ will be used later. There is a sequence of eigenvalues ${\lambda}^\infty_{i=0}$ satisfying
\begin{equation}
0=\lambda_0<\lambda_1<\lambda_2<\lambda_3<\cdot\cdot\cdot.
\label{index0}
\end{equation}
Each $\lambda_i$ has multiplicity $m_i\geq 1$. Let $\varphi_{ij}, \ i\geq 0, \  1\leq j \leq m_i$, be the normalized eigenfunctions corresponding to $\lambda_i$. Let $S(\lambda_i)$ be the eigenspace associated with $\lambda_i$ in $H^1(\Omega; \mathbb{R}^2)$. Then the set $\{\varphi_{ij}, \ i\geq 0, \ j=1,2,\cdots, \mathrm{dim}S(\lambda_i)\}$ forms a complete orthogonal  basis in $L^2(\Omega)$. Let $X=[H^1(\Omega)]^2$ and $X_{ij}=\{c\varphi_{ij}: 1\leq j\leq m_i, c\in \mathbb{R}^2\}$. Then
\begin{equation}
\mathbf{X}=\bigoplus_{i=1}^{\infty}\mathbf{X}_i, \ \quad\quad  \mathbf{X}_i=\bigoplus_{j=1}^{\mathrm{dim}S(\lambda_i)}\mathbf{X}_{ij},
\label{index1}
\end{equation}
where $\bigoplus$ denotes the direct sum of subspaces and $\mathrm{dim}S(\lambda_i)=m_i$ .

It is obvious that the system (\ref{model1}) has two constant solutions, i.e., $(u(x), v(x))\equiv(0,0)$ and $(u(x), v(x))\equiv(1,1)$ for all $x\in \Omega$. Linearizing (\ref{model}) with Neumann boundary at $(0,0)$ and $(1,1)$ respectively, by a simple computation, we know that $(0,0)$ is always unstable.  The linearized system at the point $(1,1)$ reads
\begin{eqnarray}\label{lin00}
\left\{ \begin{array}{ccc}
\begin{aligned}
	&\frac{d U}{dt}=r(1)\Delta U+r'(1)\Delta V-\sigma U, & x\in \Omega, \ t>0, \\
	&\frac{d V}{dt}=D\Delta V - V + U=0, & x\in\Omega,\ t>0,\\
    &\nabla U\cdot \nu =\nabla V\cdot \nu = 0, &x\in \partial \Omega, t>0.
\end{aligned}
\end{array} \right.
\end{eqnarray}
Let $(\varphi, \psi)e^{\rho t}$ be the solution of (\ref{lin00}). Then  the eigenvalue $\rho=\rho(\lambda_i)\stackrel{def}{=}\rho_i,$ of (\ref{lin00}) satisfies
\begin{equation}\label{eig0}
 \rho^2_i+\left[(D+r(1))\lambda_i+1+\sigma\right]\rho_i+[\sigma+r(1)\lambda_i](1+D\lambda_i)+r^\prime(1)\lambda_i=0, i=0, 1, 2, \cdot\cdot\cdot,
\end{equation}
where $\lambda_i, i=0, 1, 2, 3, \cdot\cdot\cdot$ is defined in (\ref{index0}). By the standard stability theory, for the stability/instability of the steady state $(1, 1)$ we have a
critical discriminant
\begin{equation}\label{con3}
\sigma-\left\{-\left[\frac{r^\prime(1)}{1+D\lambda_i}+r(1)\right]\lambda_i\right\}\stackrel{def}{=}\sigma-\sigma_i, \ i=1, 2, \cdot\cdot\cdot.
\end{equation}

It is easy to check that if  there is $i$ such that $\sigma_i>0$, then there must exist a positive integer $i^c$ such that
\begin{equation}\label{con4}
\sigma_i>0  \  \   \text{for} \  \   i\in [1, i^c],  \quad  \  \sigma_{i^c+j}\leq 0 \ \ \text{for} \ \ j=1, 2, 3, \cdot\cdot\cdot\infty.
\end{equation}
Note that \eqref{con4} implies that
\begin{equation}\label{ind3}
r'(1)+r(1)<0.
\end{equation}
Set
\begin{equation}\label{ccon4}
\sigma_a=\max_{1\leq i\leq i^c}\sigma_i=-\left[\frac{r^\prime(1)}{1+D\lambda_{i_a}}+r(1)\right]\lambda_{i_a}, \  \ i_a\in [1, i^c].
\end{equation}
Moreover, if we regard $\lambda_i$ as any real number and use (\ref{ind3}), then at
\begin{equation}\label{con8}
\lambda_i=\frac{1}{D}\left(\sqrt{\frac{-r^\prime(1)}{r(1)}}-1\right)
\end{equation}
the maximum of $\sigma_i$, denoted by $\sigma_c$, is attained as
\begin{equation}\label{con4c}
\sigma_c=\frac{1}{D}\left(\sqrt{-r^\prime(1)}-\sqrt{r(1)}\right)^2.
\end{equation}
It is clear that $\sigma_c\geq\sigma_a$, and that if  $i_a$ is such that (\ref{con8}) is true, then $\sigma_c=\sigma_a$.
We now have the lemma below.
\begin{lemma}\label{result2}
Suppose that (\ref{con1}) and (\ref{con4}) hold. Then, for (\ref{model}) with the zero Neumann boundary condition we have the following facts:

$(i)$  The steady state $\omega* =(1,1)$ is linearly stable if
either
$$
r'(1)+r(1)\geq 0
$$
or
$$
r'(1)+r(1)<0 \   \   \text{and}  \   \ \sigma D>-(r'(1)+r(1)).
$$

$(ii)$  $\omega* $ is unstable  if  $0<\sigma<\sigma_a$;  Usually, we call $k_a=\sqrt{\lambda_{i_a}}$ \emph{admissible wave number}.

$(iii)$  $\omega* $ is linearly stable if  $\sigma>\sigma_c$.
\end{lemma}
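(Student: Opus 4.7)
The plan is to reduce linear stability of $\omega^*=(1,1)$ to sign conditions on the roots $\rho_i$ of the characteristic equation \eqref{eig0}. Since $\omega^*$ is linearly stable iff every $\rho_i$ has negative real part and unstable as soon as some $\rho_i$ has positive real part, and since for a real quadratic $\rho^2+b\rho+c$ one has both roots with negative real parts iff $b>0$ and $c>0$ (by Vieta), only the signs of the coefficients in \eqref{eig0} need to be tracked. Writing \eqref{eig0} as $\rho_i^2+b_i\rho_i+c_i=0$, the positivity of $D$, $r(1)$, $\sigma$ together with $\lambda_i\geq 0$ gives $b_i=(D+r(1))\lambda_i+1+\sigma>0$ for every $i\geq 0$, so the question collapses to the sign of $c_i$. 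A direct algebraic regrouping yields the clean factorization
\begin{equation*}
c_i=(1+D\lambda_i)(\sigma-\sigma_i),
\end{equation*}
with $\sigma_i$ exactly as defined in \eqref{con3}. Because $c_0=\sigma>0$, it remains only to compare $\sigma$ with $\sigma_i$ for $i\geq 1$.

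For part $(i)$, first suppose $r'(1)+r(1)\geq 0$. Using $r'(1)<0$ from \eqref{con1}, one checks that for any $\lambda_i>0$
\begin{equation*}
\frac{r'(1)}{1+D\lambda_i}+r(1)>r'(1)+r(1)\geq 0,
\end{equation*}
so $\sigma_i<0\leq\sigma$, giving $c_i>0$. Otherwise, assume $r'(1)+r(1)<0$ and expand
\begin{equation*}
(1+D\lambda_i)(\sigma-\sigma_i)=r(1)D\lambda_i^2+\bigl(\sigma D+r'(1)+r(1)\bigr)\lambda_i+\sigma.
\end{equation*}
Under the hypothesis $\sigma D>-(r'(1)+r(1))$ all three coefficients of this quadratic in $\lambda_i$ are positive, hence $c_i>0$ for every $i\geq 1$ and linear stability follows.

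For part $(ii)$, the definition \eqref{ccon4} furnishes an index $i_a\in[1,i^c]$ with $\sigma_{i_a}=\sigma_a$; when $0<\sigma<\sigma_a$ the product $c_{i_a}=(1+D\lambda_{i_a})(\sigma-\sigma_a)$ is negative, so the quadratic in $\rho$ for $i=i_a$ has real roots of opposite sign, producing an unstable mode with admissible wave number $k_a=\sqrt{\lambda_{i_a}}$. For part $(iii)$, the one-variable maximization recorded in \eqref{con8}--\eqref{con4c} gives $\sigma_c=\max_{\lambda\geq 0}\bigl\{-[r'(1)/(1+D\lambda)+r(1)]\lambda\bigr\}\geq\sigma_i$ for every $i\geq 1$, so $\sigma>\sigma_c$ forces $c_i>0$ for every $i\geq 0$ and linear stability follows. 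The entire argument is algebraic; the only mildly delicate step is the identity invoked in part $(i)$, where the sharp threshold $\sigma D>-(r'(1)+r(1))$ emerges naturally as the requirement that the middle coefficient of the quadratic in $\lambda_i$ be positive, and beyond that the whole statement is Routh--Hurwitz bookkeeping.
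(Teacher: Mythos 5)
Your proof is correct and follows essentially the same route as the paper, which leaves the lemma's proof implicit in the preceding derivation: the characteristic quadratic \eqref{eig0}, the observation that its linear coefficient is always positive, and the reduction of stability to the sign of the discriminant $\sigma-\sigma_i$ in \eqref{con3} together with the extremal values $\sigma_a$ and $\sigma_c$. Your explicit factorization $c_i=(1+D\lambda_i)(\sigma-\sigma_i)$ and the Routh--Hurwitz bookkeeping are exactly the ``standard stability theory'' the authors invoke.
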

Naturally, we may expect the existence of non-constant steady states as the constant solutions are unstable and  figure out their structure. The purpose of this paper is to establish the existence and structure  of positive solutions of (\ref{model1}) in one dimensional space $\Omega=(0,l), l>0$ and to derive the criteria for the stability/unstability of each bifurcation branch.

Throughout this paper, by Lemmas \ref{bounded1} and \ref{result2},  we assume that both (\ref{con1}) and (\ref{con4}) are always true, and that $\sigma$  satisfies
\begin{equation}\label{bbc}
0<\sigma<\sigma_c
\end{equation}
for fixed constants $D$ and $l$.

This paper is organized as follows. In Section 2, we discuss the local and global  bifurcation
 to describe the structure of positive solutions near the bifurcation points and prove that these bifurcation curves can be prolonged as long as the parameter $\sigma$ is less than the critical value $\sigma_c$. In Section 3, we use the asymptotic analysis and the adjoint theory to derive the expression of the steady states. Then the stability/unstability criteria of the bifurcating solutions are given. Numerical simulations are carried out to demonstrate all the theoretical results in Section 4.
\section{Local and global bifurcation}\label{sec2}

With $\Omega=(0,l), l>0$ the system (\ref{model}) with Neumann boundary conditions can be rewritten as
\begin{eqnarray}\label{model223}
\left\{ \begin{array}{ccc}
\begin{aligned}
	&\frac{du}{dt}=(r(v)u)^{\prime\prime}+\sigma u(1-u), & x\in (0, l), \  t>0,\\
	&\frac{dv}{dt}=D v^{\prime\prime} - v + u, & x\in (0,l), \ t>0,\\
    &u^{\prime }(0)=u^{\prime }(l)=0,\ v^{\prime }(0)=v^{\prime }(l)=0,  & t>0,
\end{aligned}
\end{array} \right.
\end{eqnarray}
whose stationary system is (\ref{model1}) with one dimensional space, i.e.,
\begin{eqnarray}\label{model2}
\left\{ \begin{array}{ccc}
\begin{aligned}
	&(r(v)u)^{\prime\prime}+\sigma u(1-u)=0, & x\in (0, l),\\
	&D v^{\prime\prime} - v + u=0, & x\in (0,l),\\
    &u^{\prime }(0)=u^{\prime }(l)=0,\ v^{\prime }(0)=v^{\prime }(l)=0.
\end{aligned}
\end{array} \right.
\end{eqnarray}
We know that the eigenvalue problem
\begin{equation}
\left\{
\begin{array}{l}
-\varphi''(x)=\lambda\varphi(x),  \                   \quad\quad\quad\quad       x\in (0,l), \\[1mm]
 \varphi'(x)=0,   \        \quad\quad\quad\quad\quad\quad\quad      x=0, l
\end{array}
\right.  \label{laplace1}
\end{equation}
has a sequence of simple eigenvalues
\begin{equation}\label{eig1}
\lambda_j=(\pi j/l)^2, \ j=0, 1,2, \cdot\cdot\cdot
\end{equation}
and their corresponding eigenfunctions are
\begin{equation}
\varphi_j(x)=
\left\{
\begin{array}{l}
1, \                   \quad\quad\quad\quad\quad\quad\quad          j=0, \\[1mm]
\cos (\pi j x/l),   \        \quad\quad\quad     j>0.
\end{array}
\right.  \label{laplace12}
\end{equation}
Obviously, the set of eigenfunctions constitutes an orthogonal basis in $L^2(0, l)$. Let
$$
X=\{(u,v):  u, v\in C^2([0,l]), u'=v'=0  \, \, \,   {\rm at}\,  \  x= 0, \  l \},
$$
then $X$ is a Banach space with the usual $C^2$ norm,  and $Y=L^2(0,l)\times L^2(0,l)$ is a Hilbert space with the inner product
$$
(\omega_1, \omega_2)_Y=(u_1, u_2)_{L^2(0, l)}+(v_1, v_2)_{L^2(0, l)}
$$
for $\omega_1=(u_1,v_1) \in Y$, $\omega_2=(u_2,v_2)\in Y$. By expanding the second-order derivative term in the first equation, we have the system (\ref{model2}) in the form of
\begin{equation}\label{model22}
\left\{
\begin{array}{l}
r''(v)v'^2u+r'(v)v''u+2r'(v)v'u'+r(v)u''+\sigma u(1-u)=0,\ x\in (0,l), \\[1mm]
Dv^{\prime \prime }+u-v=0,\ x\in (0,l), \\[1mm]
u^{\prime }(0)=u^{\prime }(l)=0,\ v^{\prime }(0)=v^{\prime }(l)=0.%
\end{array}
\right.
\end{equation}
Define the map $P:  \Lambda \longrightarrow Y$ by
$$
P(\sigma, \omega)=\left(\begin{array}{c}
             r''(v)v'^2u+r'(v)v''u+2r'(v)v'u'+r(v)u''+\sigma u(1-u)\\
             Dv''+u-v
           \end{array}\right),
$$
where $\omega=(u, v)$, and $\Lambda=(0, \sigma_a)\times \mathbb{B}$ is a bounded set in $(0, \infty)\times X$. Hence, looking for the solutions of (\ref{model2}) is exactly equivalent to looking for the zero points of this map. Let $\omega^*=(u^*, v^*)=(1,1)$, then we have
$$
P(\sigma, \omega^*)=0   \   \text{for}  \   \sigma>0.
$$
We recall that, for a number $\alpha>0$, $(\alpha, \omega^*)$ is a bifurcation point of the equation $P=0$ with respect to the curve $(\sigma, \omega^*), \sigma>0$ if every neighborhood of $(\alpha, \omega^*)$ contains zeros of $P$ in $(0,\infty)\times X$ not lying on this curve. Then the results on local bifurcation of solutions for  (\ref{model2}) are as follows.
\begin{theorem}\label{th1}
Suppose that (\ref{bbc}) is true. If $j$ is a positive integer such that
\begin{equation}\label{con5}
\lambda_j<-\frac{r^\prime(1)+r(1)}{Dr(1)}, \ \text{and} \  \sigma_j\neq \sigma_k  \  \text{for all integers} \  k\neq j,
\end{equation}
 then $(\sigma_j, \omega^*)$ is a bifurcation point of $P=0$ with respect to the curve $(\sigma, \omega^*), \sigma>0$, where $\sigma_j$ is defined in (\ref{con3}). Furthermore, there is a one-parameter family of non-trivial solutions $\Gamma_j(\varepsilon)=(\sigma(\varepsilon), u(\varepsilon), v(\varepsilon))$ of the problem (\ref{model2}) for $|\varepsilon|$ sufficiently small, where $\sigma(\varepsilon), u(\varepsilon), v(\varepsilon)$ are continuous functions, $\sigma(0)=\sigma_j$ and
\begin{equation}\label{sol1}
u(\varepsilon)=u^*+\varepsilon a_j\varphi_j+o(\varepsilon), \  v(\varepsilon)=v^*+\varepsilon\varphi_j+o(\varepsilon),  \
a_j=1+D\lambda_j.
\end{equation}
The set of zero-points of $P$ consists of two curves $(\sigma, \omega^*)$ and $\Gamma_j(\varepsilon)$ in a neighborhood of the bifurcation point $(\sigma_j, \omega^*)$.
\end{theorem}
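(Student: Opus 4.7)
The plan is to apply the Crandall--Rabinowitz local bifurcation theorem to the smooth map $P:\mathbb{R}\times X\to Y$ at the candidate point $(\sigma_j,\omega^*)$. This reduces the proof to three verifications: that the linearization $L:=P_\omega(\sigma_j,\omega^*)$ is a Fredholm operator of index zero, that $\dim\ker L=1$, and that the transversality condition $P_{\sigma\omega}(\sigma_j,\omega^*)\phi_0\notin\mathrm{Range}(L)$ holds, where $\phi_0$ spans $\ker L$. The expansion \eqref{sol1} is then read off directly from the shape of $\phi_0$, and the local uniqueness statement is a standard consequence of the theorem.

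Linearizing $P$ at $\omega^*=(1,1)$ by setting $(u,v)=(1+U,1+V)$ and keeping first-order terms yields
\begin{equation*}
L\binom{U}{V}=\binom{r(1)U''+r'(1)V''-\sigma_j U}{DV''+U-V}.
\end{equation*}
Expanding $(U,V)$ in the Neumann basis $\{\varphi_k\}_{k\geq 0}$ from \eqref{laplace12} and using $-\varphi_k''=\lambda_k\varphi_k$, the kernel equation decouples mode-by-mode into a $2\times 2$ algebraic system whose determinant, by \eqref{con3}, vanishes precisely when $\sigma_j=\sigma_k$. The $k=0$ mode produces only the trivial solution since $\sigma_j>0$ by \eqref{con5}, and the simplicity hypothesis $\sigma_j\neq\sigma_k$ for $k\neq j$ eliminates every other mode. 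Solving the surviving mode-$j$ system (the second component forces $\alpha_j=(1+D\lambda_j)\beta_j$) gives $\ker L=\mathrm{span}\{\phi_0\}$ with $\phi_0=(a_j\varphi_j,\varphi_j)$, $a_j=1+D\lambda_j$. Because $L$ is a uniformly elliptic $2\times 2$ system with Neumann data on the bounded interval $(0,l)$, standard elliptic theory gives that it is Fredholm of index zero from $X$ into $Y$.

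For the transversality step I would compute the formal $L^2$-adjoint via integration by parts,
\begin{equation*}
L^*\binom{\tilde U}{\tilde V}=\binom{r(1)\tilde U''-\sigma_j\tilde U+\tilde V}{r'(1)\tilde U''+D\tilde V''-\tilde V},
\end{equation*}
and repeat the modal argument to obtain $\ker L^*=\mathrm{span}\{\phi_0^*\}$ with $\phi_0^*=(b_j\varphi_j,\varphi_j)$; the two formulas $b_j=1/(r(1)\lambda_j+\sigma_j)$ and $b_j=-(1+D\lambda_j)/(r'(1)\lambda_j)$ produced by the two components coincide because $\sigma_j$ solves the dispersion relation. Since $P_\sigma(\sigma,\omega)=(u(1-u),0)^\top$ yields $P_{\sigma\omega}(\sigma_j,\omega^*)\phi_0=(-a_j\varphi_j,0)^\top$, its $L^2$-pairing with $\phi_0^*$ equals $-a_jb_j\|\varphi_j\|_{L^2(0,l)}^2$, which is nonzero because $a_j=1+D\lambda_j>0$ and $r'(1)\lambda_j\neq 0$ keep $b_j$ nonzero. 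Therefore $P_{\sigma\omega}(\sigma_j,\omega^*)\phi_0\notin\mathrm{Range}(L)=\{\phi_0^*\}^\perp$, and the Crandall--Rabinowitz theorem produces the one-parameter family $\Gamma_j(\varepsilon)$ with $\sigma(0)=\sigma_j$ and the asserted leading-order shape \eqref{sol1}.

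The step most likely to demand care is the bookkeeping that links the dispersion polynomial \eqref{eig0}--\eqref{con3} to the spectral structure of $L$: once one sees that the vanishing of each mode-$k$ determinant is exactly $\sigma_j=\sigma_k$, the isolation of the kernel, the description of $\ker L^*$, and the sign of the transversality pairing all collapse to the same $2\times 2$ algebra, so no delicate nonlinear estimates are required beyond the smoothness of $r$ assumed in \eqref{con1}.
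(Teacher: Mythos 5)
Your proposal is correct and follows essentially the same route as the paper: both apply the Crandall--Rabinowitz theorem, identify $\ker L=\mathrm{span}\{(a_j\varphi_j,\varphi_j)\}$ via the mode-by-mode $2\times 2$ determinant condition, compute the same adjoint kernel $\ker L^*$ (your $b_j$ is the paper's $a_j^*=-(1+D\lambda_j)/(r'(1)\lambda_j)$), and verify transversality through the same pairing $\langle P_{\sigma\omega}\Phi,\Phi^*\rangle=(1+D\lambda_j)^2/(r'(1)\lambda_j)\neq 0$. The only cosmetic difference is that you invoke elliptic Fredholm theory for the index-zero property where the paper simply uses $R(L)=(\ker L^*)^\perp$ to read off that the codimension of the range is one.
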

\begin{proof}
Fix $j$, according to Theorem 1.7 of \cite{r1}, we need to verify the following conditions:

$(1)$  \  the partial derivatives $P_\sigma,  P_\omega$, and $P_{\sigma \omega}$ exist and are continuous,

$(2)$  \  $\ker P_\omega(\sigma_j,  \omega^*)$ and $Y/R(P_\omega(\sigma_j,  \omega^*))$ are one-dimensional,

$(3)$  \  let $\ker P_\omega(\sigma_j,  \omega^*)=span \{\varphi\}$, then $P_{\sigma \omega}(\sigma_j, \omega^*)\varphi \notin R(P_\omega(\sigma_j, \omega^*))$.

Because we have
$$
P_\sigma=\left(
           \begin{array}{c}
             u(1-u) \\
             0 \\
           \end{array}
         \right),   \   P_{\sigma \omega}=\left(
                                            \begin{array}{cc}
                                              1-2u & 0 \\
                                              0 & 0 \\
                                            \end{array}
                                          \right),  \
$$
and
$$
L=P_\omega(\omega^*)=\left(
                       \begin{array}{cc}
                         r(1)\frac{\partial^2}{\partial x^2}-\sigma & r^\prime(1)\frac{\partial^2}{\partial x^2}  \\
                         1 & D\frac{\partial^2}{\partial x^2}-1 \\
                       \end{array}
                     \right),
$$
it is clear that the linear operators $P_\sigma,  P_\omega$, and $P_{\sigma \omega}$ are continuous. Condition $(1)$
is verified.

Let $\Phi=(\varphi, \psi)\in \ker L$ with $\varphi=\sum_{0\leq i\leq\infty,}{a_{i}\varphi_{i}}$ and $\psi=\sum_{0\leq i\leq\infty,}{b_{i}\varphi_{i}}$. Then we have
\begin{equation}\label{ker1}
\sum_{i=0}^\infty{\begin{pmatrix} -\lambda_ir(1)-\sigma&-\lambda_ir'(1) \\1 &-\lambda_iD-1 \end{pmatrix}\begin{pmatrix}
a_{i}\\b_{i}\end{pmatrix}\varphi_{i}}=0,
\end{equation}
which means that, by the definition of $\varphi_i$ in (\ref{laplace1}), all the coefficients must vanish, that is,
\begin{equation}
\left(
      \begin{array}{cc}
     -\lambda_ir(1)-\sigma & -\lambda_ir'(1) \\
        1 & -D\lambda_i-1  \\
      \end{array}
    \right)\left(
             \begin{array}{c}
               a_i \\
               b_i \\
             \end{array}
           \right)
    =0, \  i=0, 1, 2, \cdot\cdot\cdot, \infty.
\label{eq1}
\end{equation}
This equation has a nonzero solution provided that
\begin{equation}
\det\left(
      \begin{array}{cc}
     -\lambda_ir(1)-\sigma & -\lambda_ir'(1) \\
        1 & -D\lambda_i-1  \\
      \end{array}
    \right)=0,
\label{dt1}
\end{equation}
which holds if and only if
$$
\sigma=-\left[\frac{r'(1)}{1+D\lambda_i}+r(1)\right]\lambda_i\stackrel{def}{=}\sigma_i, \   i=0, 1, 2, \cdot\cdot\cdot, \infty.
$$
Obviously, if $i=0$, then $\sigma=0$, which is excluded by the assumption of the theorem.  In view of (\ref{con5}), the equation (\ref{dt1}) holds only for $i=j$ and
$$
\sigma=\sigma_j=-\left[\frac{r'(1)}{1+D\lambda_j}+r(1)\right]\lambda_j, \ \text{for some} j\in \{1, 2, \cdot\cdot\cdot, i^c\}.
$$
Then, by solving the equation (\ref{eq1}) with $i=j$, we have
\begin{equation}\label{ker2}
\ker L= span\{\Phi\}, \  \Phi=\left(
                              \begin{array}{c}
                                a_j\\
                                1 \\
                              \end{array}
                            \right)\varphi_j,
\end{equation}
where $a_j=1+D\lambda_j$ and $\varphi_j$ is defined as in (\ref{laplace12}). Moreover, the adjoint operator of $L$ is
$$
L^*=\left(
      \begin{array}{cc}
        r(1)\frac{\partial^2}{\partial x^2}-\sigma & 1 \\
        r^\prime(1) \frac{\partial^2}{\partial x^2} & D\frac{\partial^2}{\partial x^2}-1  \\
      \end{array}
    \right),
$$
which has
$$
\ker L^*=span \{\Phi^*\}, \  \  \Phi^*=\left(
                                       \begin{array}{c}
                                         a^*_j \\
                                         1 \\
                                       \end{array}
                                     \right)\varphi_j,
$$
where $a^*_j=-\frac{1+D\lambda_j}{r^\prime(1)\lambda_j}$. It is well known that $R(L)=(\ker L^*)^\bot$. Then the codimension of $R(L)$ is the same as $\dim \ker L^*=1$. Condition $(2)$ is thus satisfied.

We know that
$$
P_{\sigma\omega}(\sigma_j, \omega^*)\Phi=\left(
                                       \begin{array}{c}
                                         -(1+D\lambda_j)\varphi_j \\
                                         0 \\
                                       \end{array}
                                     \right)
$$
and
$$
<P_{\sigma\omega}(\sigma_j, \omega^*)\Phi, \Phi^*>_Y=\frac{(1+D\lambda_j)^2}{r^\prime(1)\lambda_j}<0.
$$
Hence, $P_{\sigma\omega}(\sigma_j, \omega^*)\Phi\notin R(L)$, and so condition $(3)$ is verified. In addition, (\ref{ker2}) implies that (\ref{sol1}) is true. The proof is completed.
\end{proof}
\begin{remark}
$\textbf{(a)}$ \ Theorem \ref{th1} and Lemma \ref{result2} show that if $\sigma_j\in (0, \sigma_c)$, then $(\sigma_j, \omega^*)$ is a bifurcation point with respect to the trivial branch $(\sigma, \omega^*)$. For given $D$ and $l$, the number of such bifurcation points is equal to the cardinality of indices $j$ such that $\sigma_j\in (0, \sigma_c)$,  see (\ref{con4}).
$\textbf{(b)}$ \  By Theorem \ref{th1}, let $\Upsilon$ be the closure of the non-trivial solution set of $P=0$, then $\Gamma_j$ is the connected component of $\Upsilon \cup \{(\sigma_j, \omega^*)\}$ to which $(\sigma_j, \omega^*)$ belongs, and in a neighborhood of the bifurcation point the curve $\Gamma_j$ is characterized by the eigenfunction $\varphi_j$. In the open interval $(0, l)$ the function $\varphi_j$ has exactly $j$ zeros, thus we call the non-constant solutions in $\Gamma_j$ \texttt{mode} $j$   steady states.
\end{remark}

We next apply the global bifurcation theory of Rabinowitz, in particular, Corollary 1.12 in \cite{r2}, and the Leray-Schauder degree for compact operators to give the information on the bifurcating curve $\Gamma_j$ far from the trivial equilibrium. Following the idea of \cite{tnj}, we first rewrite the system (\ref{model22}) as
\begin{equation}\label{mo}
\left\{
\begin{array}{l}
-u''=f(u,v),  \  \  x\in (0,l), \\[1mm]
-v''=g(u,v),  \  \  x\in (0,l), \\[1mm]
u^{\prime }(0)=u^{\prime }(l)=0,\ v^{\prime }(0)=v^{\prime }(l)=0,%
\end{array}
\right.
\end{equation}
where
$$
f(u,v)=\frac{1}{r(v)}\left[r''(v)v'^2u+2r'(v)v'u'-\frac{1}{D}r'(v)u(u-v)+\sigma u(1-u)\right], \  \   g(u,v)=\frac{1}{D}(u-v).
$$
Now shifting the constant state $\omega^*=(1, 1)$ to $\mathcal{O}=(0,0)$ by setting $\tilde{u}=u-1$,$\tilde{v}=v-1$, then (\ref{mo}) is transformed into
\begin{equation}\label{ml}
\left\{
\begin{array}{l}
-\tilde{u}''=f_0\tilde{u}+f_1\tilde{v}+\tilde{f}(\tilde{u},\tilde{v}),\ x\in (0,l),\\[1mm]
-\tilde{v}''=g_0\tilde{u}+g_1\tilde{v}+\tilde{g}(\tilde{u},\tilde{v}),\ x\in (0,l),\\[1mm]
\tilde{u}^{\prime }(0)=\tilde{u}^{\prime }(l)=0,\ \tilde{v}^{\prime }(0)=\tilde{v}^{\prime }(l)=0,%
\end{array}
\right.
\end{equation}
where $\tilde{f}$ and $\tilde{g}$ are higher-order terms of $\tilde{u}$ and $\tilde{v}$,
$$
f_0=f_u(1,1)=-\frac{r'(1)}{Dr(1)}-\frac{\sigma}{r(1)}, \  \
f_1=f_v(1,1)=\frac{r'(1)}{Dr(1)},
$$
and
$$
g_0=g_u(1,1)=\frac{1}{D},  \  \
g_1=g_v(1,1)=-\frac{1}{D}.
$$
Note that $g_1<0$, and $f_0>0$ since $\sigma<\sigma_c<\frac{-r'(1)}{D}$, see (\ref{con4c}).
  Let $F_\sigma$ and $F$ be the inverse operator of $f_0-\frac{d^2}{dx^2}$ and $-g_1-\frac{d^2}{dx^2}$, respectively; moreover, set
$$
\widetilde{\omega}=(\tilde{u}, \tilde{v}), \   M(\sigma)\widetilde{\omega}=\bigg(2f_0F_\sigma(\tilde{u})+f_1F_\sigma(\tilde{v}), g_0F(\tilde{u})\bigg), \  \text{and }  \   H(\sigma, \widetilde{\omega})=\left(F_\sigma(\tilde{f}(\tilde{u}, \tilde{v})), 0\right).
$$
Then the boundary value problem (\ref{ml}) can be rewritten in the matrix form
\begin{eqnarray}\label{map}
\widetilde{\omega}=M(\sigma)\widetilde{\omega}+H(\sigma, \widetilde{\omega})\stackrel{def}{=}T(\sigma, \widetilde{\omega}), \:\: \   M(\sigma)=\begin{pmatrix} 2f_0F_\sigma & f_1F_\sigma\\g_0F & 0 \end{pmatrix}, \   \   \widetilde{\omega}\in X.
\end{eqnarray}
Obviously, for any given $\sigma>0$ the linear operator $M(\sigma)$ is compact on $X$. On closed $\sigma$ sub-intervals of $(0, \infty)$ the operator $H(\sigma, \widetilde{\omega})$ is compact on $X$ and $H(\sigma, \widetilde{\omega})=o(\|\widetilde{\omega}\|)$ for $\widetilde{\omega}$ near zero uniformly.

The result below will play a critical role in the proof of the global bifurcation of the solutions to (\ref{model2}).
\begin{lemma}\label{lem1}
 Assume that (\ref{bbc}) and (\ref{con5}) are satisfied. Then $1$ is an eigenvalue of $M(\sigma_j)$ with algebraic multiplicity one.
\end{lemma}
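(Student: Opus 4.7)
The plan is to reduce the spectral question for $M(\sigma_j)$ to a sequence of $2\times 2$ algebraic systems by expanding in the orthogonal basis $\{\varphi_i\}_{i\ge 0}$ from (\ref{laplace12}), and then to apply the Fredholm alternative block by block. Because $F_{\sigma}$ and $F$ act diagonally on this basis (with $F_{\sigma}\varphi_i=(f_0+\lambda_i)^{-1}\varphi_i$ and $F\varphi_i=(-g_1+\lambda_i)^{-1}\varphi_i$), the operator $M(\sigma_j)$ is block-diagonal, so both the eigenvalue equation $M(\sigma_j)\widetilde\omega=\widetilde\omega$ and its generalized version $(I-M(\sigma_j))W=\Phi$ decouple mode by mode.

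First I would identify $\ker(I-M(\sigma_j))$. Writing $\widetilde\omega=(\tilde u,\tilde v)$ with $\tilde u=\sum_i\alpha_i\varphi_i$, $\tilde v=\sum_i\beta_i\varphi_i$, the eigenvalue equation rearranges into the block system
$$A_i\begin{pmatrix}\alpha_i\\ \beta_i\end{pmatrix}=0,\qquad A_i=\begin{pmatrix}\lambda_i-f_0 & -f_1\\ -g_0 & \lambda_i-g_1\end{pmatrix},\qquad i\ge 0.$$
A direct computation shows $\det A_i=0$ precisely when $\sigma=\sigma_i$ in the sense of (\ref{con3}). By hypothesis (\ref{con5}), $\sigma_k\ne\sigma_j$ for $k\ne j$, so at $\sigma=\sigma_j$ only the $j$-th block is singular, and its one-dimensional kernel is spanned by $(a_j,1)^T$ with $a_j=1+D\lambda_j$, recovering $\Phi=(a_j\varphi_j,\varphi_j)$ from Theorem~\ref{th1}. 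In particular, $\dim\ker(I-M(\sigma_j))=1$.

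Next, I would promote this to algebraic multiplicity one by showing $\Phi\notin R(I-M(\sigma_j))$, equivalently that no $W\in X$ solves $(I-M(\sigma_j))W=\Phi$. The same block decomposition reduces the problem to solving a $2\times 2$ system with matrix $A_i$ in each mode $i$: for $i\ne j$ the matrix $A_i$ is invertible and forces the $i$-th coefficients of $W$ to vanish, while for $i=j$ one is left with the singular system
$$A_j\begin{pmatrix}\alpha_j\\ \beta_j\end{pmatrix}=\begin{pmatrix}2f_0a_j+f_1\\ g_0a_j\end{pmatrix}.$$
By the Fredholm alternative, solvability is equivalent to the right-hand side being orthogonal to $\ker A_j^{T}=\mathrm{span}\{(g_0,\lambda_j-f_0)^{T}\}$.

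The main (and essentially only) computational obstacle is to show that this orthogonality fails. Using the mode-$j$ dispersion identity $f_1=a_j(\lambda_j-f_0)$, which is the first row of $A_j(a_j,1)^{T}=0$, the pairing simplifies to
$$g_0\bigl[2f_0a_j+f_1+a_j(\lambda_j-f_0)\bigr]=g_0\bigl[f_0a_j+f_1+a_j\lambda_j\bigr]=2g_0a_j\lambda_j.$$
Since $g_0=1/D>0$, $a_j=1+D\lambda_j>0$, and $\lambda_j>0$ for $j\ge 1$, this quantity is strictly positive. Hence the Fredholm condition fails, so $\Phi\notin R(I-M(\sigma_j))$, and compactness of $M(\sigma_j)$ yields $\ker(I-M(\sigma_j))^{2}=\ker(I-M(\sigma_j))$. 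This gives algebraic multiplicity one and completes the argument.
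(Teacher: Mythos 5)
Your proof is correct and follows essentially the same route as the paper: expand $M(\sigma_j)$ into $2\times 2$ blocks over the cosine basis $\{\varphi_i\}$, identify the one-dimensional kernel at mode $j$, and exclude a generalized eigenvector by showing $\Phi\notin R(I-M(\sigma_j))$ through an orthogonality condition against the adjoint kernel. The only cosmetic difference is that you run the Fredholm alternative on the transposed $2\times 2$ block $A_j^{T}$, whereas the paper computes $\ker(M^*(\sigma_j)-I)$ for the full adjoint operator and evaluates the pairing $(\Phi,\Phi^*)\neq 0$; the nonvanishing quantity is the same up to normalization.
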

\begin{proof}
Let $\Phi=(\varphi,\psi)$, \ $\varphi=\sum_{i=0}^{\infty} a_i\varphi_i,  \   \psi=\sum_{i=0}^{\infty} b_i\varphi_i$. The proof of Theorem \ref{th1} implies that
$$
\left(M(\sigma_j)-I\right)\Phi=0 \Rightarrow \left(
\begin{array}{cc}
r(1)\frac{d^2}{dx^2} -\sigma_{j} & r'(1) \frac{d^2}{dx^2}\\[2mm]
  1 & D\frac{d^2}{dx^2}-1
\end{array}
\right)\Phi=0,
$$
has one unique solution $\Phi=\left(\begin{array}{c}
1+D\lambda_j \\
1
  \end{array}\right)\varphi_j$, which shows that $1$ is an eigenvalue of $M(\sigma_j)$ with the unique eigenfunction. Thus, we have $\dim \ker (M(\sigma^j)-I)=1$. Next we will prove that the eigenvalue $1$ is simple. It is well known that the algebraic multiplicity of $1$ is equal to the dimension of the generalized null space $\bigcup_{i=1}^\infty\ker(M(\sigma_j)-I)^i$. So we need only to verify $\ker\left(M(\sigma_j)-I\right)\cap R\left(M(\sigma_j)-I\right)=\{0\}$. Let $M^*(\sigma_j)$ be the adjoint operator of $M(\sigma_j)$. If $(\varphi, \psi)\in \ker(M^*(\sigma_j)-I)$, from (\ref{map}) it follows that
\begin{equation}
  \left\{
\begin{array}{l}
2f_0^jF_\sigma(\varphi)+g_0^jF(\psi)=\varphi,
\\[2mm]
f_1^jF_\sigma(\varphi)=\psi,
\end{array}
\right.  \label{ker}
\end{equation}
where
$$
f_0^j=-\frac{r'(1)}{Dr(1)}-\frac{\sigma_j}{r(1)}, \  \  f_1^j=\frac{r'(1)}{Dr(1)}, \  \
g_0^j=\frac{1}{D}.
$$
By the definition of $F_\sigma$ and $F$, the system (\ref{ker}) can be expanded as
\begin{equation}
\left\{
\begin{array}{l}
-f_1^j\varphi''=f_\varphi\varphi+f_\psi\psi,
\\[2mm]
-\psi''=f_1^j\varphi-f_0^j\psi
\end{array}
\right.  \label{ker1}
\end{equation}
with
$$
f_\varphi=f_1^jg_1^j+2f_0^jf_1^j, \   \    f_\psi=f_1^jg_0^j-2f_0^jg_1^j-2(f_0^j)^2.
$$
Again set  $\varphi=\sum_{i=0}^{\infty} a_i\varphi_i,  \   \psi=\sum_{i=0}^{\infty} b_i\varphi_i$. By (\ref{ker1}), we get
\begin{equation}
\sum_{i=0}^{\infty}A_i^*\left(\begin{array}{c}
                             a_i \\
                             b_i
                           \end{array}\right)
\phi_i=0, \
 A_i^*=\left(
\begin{array}{cc}
 f_\phi-f_1^j\lambda_i & f_\psi\\
  f_1^j& -\lambda_i-f_0^j
\end{array}
\right).\nonumber
\end{equation}
By $\sigma\neq 0$ and (\ref{con5}), we know that $\det A_i^*=0$ if and only if $i=j$ and
$$
A_j^*=\left(\begin{array}{cc}
                                                      0 & 0 \\
                                                      f_1^j & -\lambda_j-f_0^j
                                                    \end{array}
                           \right).
$$
Therefore, $\ker(M^*(\sigma_j)-I)$ is generated by the unique element $\Phi^*=\left(\begin{array}{c}
                             f_0^j+\lambda_j \\
                             f_1^j
                           \end{array}\right)\phi_j$, and we know that $(M(\sigma_j)-I)\Phi=0$ has one unique solution (up to a constant multiple ) $\Phi=\left(\begin{array}{c}
                             -f_1^j \\
                             f_0^j\lambda_j
                             \end{array}\right)\varphi_j$, and thus $(\Phi,\Phi^*)=-2f_1^j\lambda_j\neq0$. Then
$\Phi\notin(Ker(M^*(\sigma_j)-I))^\perp=R(M(\sigma_j)-I)$, which implies that $\ker(M(\sigma_j)-I)\cap R(M(\sigma_j)-I)=\{0\}$. Thus, the desired result follows.
\end{proof}

We now state the results on the global bifurcation of the boundary value problem (\ref{model2}).
\begin{theorem}\label{th11}
 Suppose that (\ref{bbc}) and (\ref{con5}) are true. Then the projection of the bifurcation curve $\Gamma_j$ onto the $\sigma-$axis is an interval $(0, \sigma_j)$. Furthermore, the system (\ref{model2}) has at least one non-constant positive solution if $\sigma\in (0, \sigma_a)$ and $\sigma\neq \sigma_k$ for any positive integer $k$.
\end{theorem}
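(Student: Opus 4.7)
My plan is to apply the global bifurcation theorem of Rabinowitz (Corollary 1.12 in \cite{r2}) to the compact-operator fixed-point equation $\widetilde\omega=T(\sigma,\widetilde\omega)$ of (\ref{map}). Lemma \ref{lem1} has already verified the key hypothesis that $1$ is an algebraically simple eigenvalue of $M(\sigma_j)$, so Rabinowitz's theorem yields a connected closed continuum $\Gamma_j$ of nontrivial solutions through $(\sigma_j,\omega^*)$ satisfying the usual alternative: either (a) $\Gamma_j$ is unbounded in $(0,\infty)\times X$, or (b) $\Gamma_j$ returns to the trivial branch at some $(\sigma_k,\omega^*)$ with $k\ne j$ (at which $1$ is again an eigenvalue of $M$ of odd algebraic multiplicity).

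Alternative (b) is to be excluded by preservation of the mode-$j$ nodal structure. By Theorem \ref{th1} and (\ref{sol1}), the branch near $(\sigma_j,\omega^*)$ carries solutions with $v-1$ having exactly $j$ simple zeros in $(0,l)$. Since the second equation $Dv''-v+u=0$ is uniformly elliptic and $u,v$ stay strictly positive along $\Gamma_j$ by Lemma \ref{bounded1}, a standard continuity/unique-continuation argument shows the number of simple zeros of $v-1$ is locally constant on $\Gamma_j$, so $\Gamma_j$ cannot collide with $(\sigma_k,\omega^*)$ for $k\ne j$ without changing the nodal count from $j$ to $k$. Hence (a) holds. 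Combining Lemma \ref{bounded1} with Schauder estimates for (\ref{model2}) gives uniform $C^2$-bounds on $(u,v)$ independent of $\sigma>0$, so the unboundedness of $\Gamma_j$ must come from the $\sigma$-coordinate, i.e.\ $\sigma\to 0^+$ or $\sigma\to\infty$ along $\Gamma_j$.

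The main obstacle is the sharp upper bound that the $\sigma$-projection of $\Gamma_j$ is contained in $(0,\sigma_j]$; this splits into two points. First, to rule out $\sigma\to\infty$: if $\sigma_n\to\infty$ along $\Gamma_j$, dividing $(r(v_n)u_n)''+\sigma_n u_n(1-u_n)=0$ by $\sigma_n$ and passing to a $C^2$-limit of $(u_n,v_n)$ forces $u^*(1-u^*)\equiv 0$, whence positivity gives $u^*\equiv 1$ and then $v^*\equiv 1$; invoking the non-existence results of \cite{mwp} at large $\sigma$ (alternatively, a uniform implicit-function-theorem argument at the trivial branch using that $\omega^*$ is linearly stable for $\sigma>\sigma_c$, see Lemma \ref{result2}) yields a contradiction with $(u_n,v_n)\in\Gamma_j\setminus\{\omega^*\}$. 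Second, to rule out $\sigma>\sigma_j$: I would invoke the direction of bifurcation computed in Section 3, whose second-order Lyapunov--Schmidt expansion gives a subcritical branch at $\sigma_j$; any later crossing of $\{\sigma=\sigma_j\}$ by the connected set $\Gamma_j$ would then produce a nontrivial mode-$j$ solution at $\sigma=\sigma_j$ distinct from the local parameterization (\ref{sol1}), contradicting the local uniqueness in Theorem \ref{th1}. Combined with connectedness and $\sigma\to 0^+$, this forces the projection of $\Gamma_j$ onto the $\sigma$-axis to equal $(0,\sigma_j)$. The final assertion follows by specializing to $j=i_a$ (so $\sigma_{i_a}=\sigma_a$): for any $\sigma\in(0,\sigma_a)$ distinct from every $\sigma_k$, the slice $\{\sigma\}\times X$ meets $\Gamma_{i_a}$ in a nontrivial solution, which Lemma \ref{bounded1} guarantees is positive.
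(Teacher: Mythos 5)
Your overall architecture---Rabinowitz's Corollary 1.12 applied to the compact fixed-point formulation (\ref{map}), exclusion of the ``return to the trivial branch'' alternative by preservation of the mode-$j$ structure, and a priori bounds to pin down the $\sigma$-projection---is the same as the paper's. Two of your steps, however, do not hold up as written. On mode preservation: the paper does not use a ``continuity/unique-continuation'' argument; it appeals to the reflective and periodic extension method of Takagi, Nishiura and Jang--Ni--Tang (\cite{t1,niy,tnj}), and that is not a cosmetic difference. The danger along a global continuum is precisely that a simple zero of $v-1$ degenerates into a double zero and disappears (or that zeros migrate through the boundary), so the nodal count is \emph{not} automatically locally constant; the reflection-plus-ODE-uniqueness argument is what rules this out, and your appeal to uniform ellipticity of the $v$-equation does not substitute for it. As stated, this step is an assertion rather than a proof.

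The more serious gap is your exclusion of $\sigma>\sigma_j$. You invoke subcriticality of the branch from the second-order expansion of Section 3, but the sign of $\sigma_2^j$ is never determined analytically anywhere in the paper---it is only evaluated numerically for one specific $r(v)$ in Section 4---so subcriticality is not an available hypothesis. Even granting it, the claimed contradiction with ``local uniqueness in Theorem \ref{th1}'' fails: the Crandall--Rabinowitz uniqueness statement holds only in a small neighborhood of $(\sigma_j,\omega^*)$, and nothing prevents the global continuum from crossing the hyperplane $\{\sigma=\sigma_j\}$ at points far from $\omega^*$ without violating it. The paper instead works inside the bounded set $\Lambda=(0,\sigma_a)\times\mathbb{B}$, uses the explicit Leray--Schauder index jump at $\sigma_j$ (your shortcut via the algebraic simplicity in Lemma \ref{lem1} is an acceptable replacement for that computation), and then invokes Lemmas \ref{bounded1} and \ref{result2} to force the continuum to exit through $\sigma=0$. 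You would need to either reproduce an argument of that type or find some other non-circular reason the branch cannot extend to the right of $\sigma_j$; the mechanism you propose does not work.
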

\begin{proof}
By the proof of Lemma \ref{lem1}, we know that the linear operator $I-M(\sigma): X \rightarrow X$ is a bijection when $\sigma\in (0, \sigma_a)\setminus \sigma_j$ and being located in a small neighborhood of $\sigma_j$. For this fixed $\sigma$, let $\mathcal{O}$ be an isolated solution of (\ref{map}). The index of this isolated zero of the map $I-\left(T(\sigma, . \right)$ is given by
$$
\mathrm{index}\left(I-T(\sigma, .), (\sigma, \mathcal{O})\right)=\mathrm{deg}\left(I-M(\sigma), \mathcal{B}, \mathcal{O}\right)=(-1)^\beta,
$$
where $\mathcal{B}$ is a sufficiently small ball centered at $\mathcal{O}$, and $\beta$ is the sum of the algebraic multiplicities of the eigenvalues of $M(\sigma)$ that are larger than $1$. For our bifurcation analysis, we necessarily verify that this index changes as the bifurcation parameter $\sigma$ crosses $\sigma_j$, i.e., for $\varepsilon>0$ sufficiently small,
\begin{equation}
\mathrm{index}\left(I-T(\sigma_j-\varepsilon, \cdot),(\sigma_j-\varepsilon, \mathcal{O}) \right)\neq \mathrm{index}\left(I-T(\sigma_j+\varepsilon), (\sigma_j+\varepsilon, \mathcal{O}) \right). \label{index}
\end{equation}
Indeed, if $\varrho$ is an eigenvalue of $M(\sigma)$ corresponding to an eigenfunction $(\varphi,\psi)$, then we have
\begin{equation}
\left\{
\begin{array}{l}
-\varrho\varphi''=(2-\varrho)f_0\varphi+f_1\psi,
\\
-\varrho\psi''=g_0\varphi+\varrho g_1\psi.
\end{array}
\right.  \nonumber
\end{equation}
Once again let $\varphi=\sum_{i=0}^\infty a_i\varphi_i$ and $\psi=\sum_{i=0}^\infty b_i\varphi_i$, then the above system can be expanded as
$$
\sum_{i=0}^{\infty}\begin{pmatrix}
 (2-\varrho)f_0-\varrho\lambda_i & f_1\\
 g_0& (g_1-\lambda_i)\varrho
\end{pmatrix}
\begin{pmatrix}
                a_i \\
                b_i
\end{pmatrix}\varphi_i=0.
$$
Then the set of eigenvalues of $M(\sigma)$ consists of all $\varrho's$ that solve the characteristic equation
\begin{equation}
 (f_0+\lambda_i)(\lambda_i-g_1)\varrho^2-2f_0(\lambda_i-g_1) \varrho-g_0f_1=0, i=0,1,2,\cdot\cdot\cdot . \label{eigen}
\end{equation}
Taking $\sigma=\sigma_j$, if $\varrho=1$ is a root of (\ref{eigen}), then it is concluded that
$$
\sigma_j=-\left[\frac{r'(1)}{(1+D\lambda_i)}+r(1)\right]\lambda_i=\sigma_i,
$$
and thus by the assumption (\ref{bbc}) we have $i=j$. Therefore, if we do not count the eigenvalues corresponding to $i=j$ in (\ref{eigen}), $M(\sigma)$ has the same number of eigenvalues which are larger than $1$ for all $\sigma$ close to $\sigma_j$, and have the same multiplicities. So we need only to consider the case of $i=j$ in (\ref{eigen}). Let $\varrho(\sigma)$ and $\overline{\varrho}(\sigma)$ be the two roots of (\ref{eigen}), then we have
$$
\varrho(\sigma_j)=1,   \      \overline{\varrho}(\sigma_j)=\frac{f_0^j-\lambda_j}{f_0^j+\lambda_j}<1.
$$
Obviously, for $\sigma$ close to $\sigma_j$,  $\overline{\varrho}(\sigma)<1$ is always true. Because $\varrho(\sigma)$ is an increasing function of $f_0^j$ and $f_0^j$ is a decreasing function in $\sigma$, the function $\varrho(\sigma)$  will increase with the decrease of $\sigma$. Thus, we have
$$
\varrho(\sigma_j-\varepsilon)>1 ,  \   \   {\varrho}(\sigma_j+\varepsilon)<1 ,
$$
which implies that $M(\sigma_j-\varepsilon)$ has exactly one more eigenvalue
  larger than $1\,,$ than  $M(\sigma_j+\varepsilon)$ does, and by using the same method as Lemma \ref{lem1} we can prove that the algebraic multiplicity of this eigenvalue is one. Hence, (\ref{index}) is verified.

Now, by  (\ref{index}) and Corollary 1.12 in \cite{r2}, we conclude that $\Gamma_j$ either meets $\partial \Lambda$ or meets $(\sigma_k, 0)$ for some $k\neq j$ and $\sigma_k>0$. It is easy to check that the system (\ref{model2}) is reflective. Thus, we can follow the idea in \cite{t1, niy} and use a reflective and periodic extension method, which is also exactly the same as that in \cite{tnj}, to show that the first alternative must occur. Then, by Lemmas \ref{bounded1} and \ref{result2}, we know that the desired results are true. The proof is completed.
\end{proof}
\section{Stability of bifurcating branches}\label{sec3}

In this section we shall study the stability of steady states $(\widehat{u}(x), \widehat{v}(x))$ bifurcating from $\omega^*=(u^*, v^*)=(1, 1)$ by using the asymptotic analysis and perturbation method. We first look for  the asymptotic expression of steady states $(\widehat{u}(x), \widehat{v}(x))$. To proceed, we let
\begin{equation}\label{sigma}
\sigma=\sigma_0+\sum_{k=1}^{\infty} \varepsilon^{k} \sigma_{k},
\end{equation}
where $\sigma_0$ needs to be ascertained afterwards, $0<\epsilon \leq 1$. Then expand $\widehat{u}(x)$ and $\widehat{v}(x)$ as power series in $\epsilon$, that is,
\begin{equation}\label{pattern}
\left\{
\begin{array}{l}
\widehat{u}=u^*+\sum_{k=1}^{\infty} \varepsilon^{k} u_{k}, \\[2mm]
\widehat{v}=v^*+\sum_{k=1}^{\infty} \varepsilon^{k} v_{k}.%
\end{array}%
\right.
\end{equation}%
Substituting (\ref{sigma}) and (\ref{pattern}) into (\ref{model22}), and expanding  $r(v), r'(v)$ and $r''(v)$ as Taylor expansion at $(1,1)$, we collect the coefficients of $O(\epsilon)$ and $O(\epsilon^2)$, respectively, and then have the following two systems
\begin{equation}
\left\{
\begin{array}{l}
r(1)u_1''-\sigma_0u_1+r'(1)v_1''=0, \  x\in (0,l), %
 \\
Dv_1''+u_1-v_1=0, \  x\in (0,l),  \\
u_1'(0)=u_1'(l)=0, \\
v_1'(0)=v_1'(l)=0,
\end{array}
\right.   \label{lin1}
\end{equation}%
and
\begin{equation}
\left\{
\begin{array}{l}
r(1)u_2''-\sigma_0u_2+r'(1)v_2''=G_1,  \    x\in (0,l),%
 \\
Dv_2''+u_2-v_2=0,   \  x\in (0,l), \\
u_2'(0)=u_2'(l)=0, \\
v_2'(0)=v_2'(l)=0,
\end{array}
\right.   \label{lin2}
\end{equation}%
where
$$
G_1=-r''(1)v_1'^2-r'(1)v_1''u_1-r''(1)v_1''v_1-2r'(1)v_1'u_1'-r'(1)v_1u_1''+\sigma_0u_1^2+\sigma_1u_1.
$$
Directly solving the system (\ref{lin1}) yields a unique non-constant solution (up to a constant multiple ) for some integer $j\in \{1, 2, \cdot\cdot\cdot, i^c\}$
\begin{equation}
\left\{
\begin{array}{l}
u_{1}=a(j)\varphi_j,\;\;a(j)=1+D\lambda_j>0, \\
{v}_{1}= \varphi_j,%
\end{array}%
\right.   \label{sol3}
\end{equation}
 as long as $\sigma_0$ is equal to
\begin{equation}\label{sol2}
-\left(\frac{r^\prime(1)}{1+D\lambda_j}+r(1)\right)\lambda_j\stackrel{def}{=}\sigma_{0}^{j},
\end{equation}
where $(\lambda_j, \varphi_j)$ is given by (\ref{eig1}) and (\ref{laplace12}). Here the uniqueness of solution indicates that $\sigma_0^j\neq \sigma_0^k$ for any integer $k\neq j$. In fact, here $\sigma_0^j$ is just  $\sigma_j$ in Section \ref{sec2}. Then we have
\begin{equation}\label{max}
\sigma_{\max}=\max_{j\in [1, i^c]}\sigma_0^j=\max_j \left\{-\left(\frac{r^\prime(1)}{1+D\lambda_j}+r(1)\right)\lambda_j,  \ j=1, 2, \cdot\cdot\cdot, i^c  \right\}=\sigma_0^{i_a}
\end{equation}
for some positive integer $i_a$, and here $\sigma_0^{i_a}$ is exactly $\sigma_a$ in Section \ref{sec2}. It is observed that $i_a$ is the wave mode maximizing $\sigma_0^j$ such that $\sigma_0^{i_a}$ is the maximum bifurcation value.  We shall call $i_a$ the \emph{ admissible wave mode} corresponding to \emph{the admissible wave number} in Section \ref{sec2}.

In order to solve (\ref{lin2}), we consider its adjoint system
\begin{equation}
\left\{
\begin{array}{l}
r(1)\overline{u}_{2}^{\prime \prime }-\left(\sigma_0%
+\frac{r'(1)}{D}\right) \overline{u}_{2}+\overline{v}_{2}=0, \ x\in (0, l), \\
D\overline{v}_{2}^{\prime \prime }+\frac{r'(1)}{D}\overline{u}_{2}-
\overline{v}_{2}=0,\;\;  x\in (0,l),\\
\overline{u}_{2}^{\prime }(0)=\overline{u}_{2}^{\prime }(l)=0, \\
\overline{v}_{2}^{\prime }(0)=\overline{v}_{2}^{\prime }(l)=0.
\end{array}
\right.   \label{lin3}
\end{equation}
This system has a non-constant solution
\begin{equation}
\left\{
\begin{array}{l}
\overline{u}_{2}=c(j)\varphi_j,\;\;c(j)=\frac{D(1+D\lambda_j)}{r'(1)}<0, \\
\overline{v}_{2}= \varphi_j.%
\end{array}%
\right.  \label{sol4}
\end{equation}
Here $\varphi_j$ is the same as in (\ref{sol3}). By the Fredholm alternative \cite{fre1}, the equation (\ref{lin2}) admits a solution if and only if
$$
\int_{0}^{l}\overline{u}_2G_{1}dx=0.
$$
Solving this equation yields
$$
\sigma_{1}=\sigma_{1}^j=\ 0.
$$
Then $G_1$ in (\ref{lin2}) can be simplified  to
\begin{equation*}
G_{1}=\frac{1}{2}\sigma_0^ja^2(j)+\bigg(\lambda_jr''(1)+2\lambda_jr'(1)a(j)+\frac{1}{2}\sigma_0a^2(j)\bigg)\cos (2\sqrt{\lambda_j}x).
\end{equation*}
By this, we can set a particular solution of (\ref{lin2}) as
\begin{equation}
\left\{
\begin{array}{l}
u_{2}=d_{1}(j)+d_{2}(j)\cos (2\sqrt{\lambda_j}x), \\[2mm]
v_{2}=d_{3}(j)+d_{4}(j)\cos (2\sqrt{\lambda_j}x).%
\end{array}%
\right.   \label{pattern2}
\end{equation}
Substitution of (\ref{pattern2}) into (\ref{lin2}) leads to
\begin{eqnarray}
d_{1}(j)&=&d_{3}(j)=-\frac{a^2(j)}{2};  \  \quad   d_2(j)=(1+4D\lambda_j)d_4(j);   \notag \\
\notag\\
d_{4}(j)&=&\frac{\lambda_jr''(1)+2\lambda_jr'(1)a(j)+\frac{1}{2}\sigma_0a^2(j)}{-4r'(1)\lambda_j+
\left(-4r(1)\lambda_j-\sigma_0^j\right)\left(1+4D\lambda_j \right)}.  \label{d1234}
\end{eqnarray}%
On account of $\sigma_1=0$, we need to find the expression of $\sigma_2$. Again substituting (\ref{sigma}) and (\ref{pattern}) into (\ref{model22}) and equating the coefficients of $O(\epsilon^3)$, we have
\begin{equation}\label{lin4}
\left\{
\begin{array}{l}
r(1)u_{3}^{\prime \prime }-\sigma_0 u_{3}+r'(1)v_{3}^{\prime \prime }=G_{2}, \  \  x\in (0, l),\\
Dv_{3}^{\prime \prime }+u_{3}- v_{3}=0, \  \   x\in (0,l), \\
u_{3}^{\prime }(0)=u_{3}^{\prime }(l)=0, \\
v_{3}^{\prime }(0)=v_{3}^{\prime }(l)=0,%
\end{array}%
\right.
\end{equation}
where
\begin{eqnarray}\label{F2}
\begin{aligned}
G_2=&-2r''(1)v_1'v_2'-r''(1)v_1'^2u_1-r'''(1)v_1v_1'^2-r'(1)v_1''u_2-r'(1)v_2''u_1\\
\ &-r''(1)v_1v_1''u_1-r''(1)v_1v_2''-r''(1)v_2v_1''-\frac{1}{2}r'''(1)v_1^2v_1''-2r'(1)v_1'u_2'\\
\ &-2r'(1)v_2'u_1'-2r''(1)v_1v_1'u_1'-r'(1)v_1u_2''-r'(1)v_2u_1''-\frac{1}{2}r''(1)v_1^2u_1''\\
\ &+2\sigma_0^ju_1u_2+\sigma_2u_1. \\
\end{aligned}
\end{eqnarray}
Then applying the solvability condition $\int_{0}^{l}\overline{u}_2G_{2}dx=0$ of (\ref{lin4}) yields
\begin{eqnarray}\label{sigma2}
\begin{aligned}
\sigma _{2}^j=& -r''(1)\lambda_j\left[\frac{3}{8}+\frac{d_3(j)}{a(j)}+\frac{d_4(j)}{2a(j)}\right]-r'(1)\lambda_j\left[\frac{d_1(j)}{a(j)}
+\frac{d_2(j)}{2a(j)}+\frac{d_4(j)}{2}+d_3(j)\right]\\
\ &-2\sigma_0^j\left[d_1(j)+\frac{1}{2}d_2(j)\right]-\frac{r'''(1)\lambda_j}{8a(j)}.\\
\end{aligned}
\end{eqnarray}
We now know that when the parameter $\sigma$ given by (\ref{sigma}) lies in the neighborhood of $\sigma_0^j$, the bifurcating solution $(\widehat{u}, \widehat{v})$ is described by (\ref{pattern}) with $(u_1, v_1)$ and  $(u_2, v_2)$ formulated by (\ref{sol3}) and (\ref{pattern2}), respectively. To bring out the relationship between the solution $(\widehat{u}, \widehat{v})$ and its bifurcation location $\sigma_0^j$, we denote
$(\widehat{u}, \widehat{v})$  by  $(\widehat{u}_j, \widehat{v}_j)$, that is,
\begin{equation}\label{pattern3}
\left\{
\begin{array}{l}
\widehat{u}_{j}=u^{\ast}+\varepsilon u_{1}+\varepsilon ^{2}u_{2}+\cdots , \\
\widehat{v}_{j}=v^{\ast}+\varepsilon v_{1}+\varepsilon
^{2}v_{2}+\cdots. %
\end{array}%
\right.
\end{equation}
Normally, $\omega^*=(u^*, v^*)$ is called the\emph{ base} term of the non-constant steady state $(\widehat{u}, \widehat{v})$ whose shape and amplitude primarily depend on the leading term $(u_1, v_1)$ when $\epsilon$ is small, that is,
\begin{equation}\label{amplitude}
\|\widehat{u}_j-u^*\|^2\approx(1+D\lambda_j)^2\frac{1}{\sigma_2^j}(\sigma-\sigma_0^j),
\end{equation}
which shows the maximum change of the bacterial density from the base term. Taking into account (\ref{laplace1}) and (\ref{sol3}), the leading term has wave mode $j$. Thus, $j$ is called the \emph{principal} wave mode of the solution $(\widehat{u}_j, \widehat{v}_j)$.

We shall analyze the stability of the solution (\ref{pattern3}) located at the $j$th bifurcating branch by discussing the sign of the principal eigenvalue of linearized system of (\ref{model22}) around $(\widehat{u}_j, \widehat{v}_j)$. Let
\begin{equation*}
\left\{
\begin{array}{c}
u=\widehat{u}_{j}+\varphi e^{\gamma t}, \\
v=\widehat{v}_{j}+\psi e^{\gamma t}%
\end{array}%
\right.
\end{equation*}
 and substitute it into (\ref{model22}), and expand  $r(v), r'(v)$ and $r''(v)$ as Taylor expansion at $(\widehat{u}_j, \widehat{v}_j)$. Then the linearized system of (\ref{model22}) is
 \begin{equation}\label{lin5}
 \left\{
\begin{array}{l}
r(\widehat{v}_j)\varphi^{\prime \prime }+r'(\widehat{v}_{j})\widehat{u}_j\psi^{\prime\prime}+2r'(\widehat{v}_j){\widehat{v}_{j}}^{\prime}\varphi'
+Q_1\psi^{\prime}+Q_2\psi+Q_3\varphi=\gamma \varphi,\;\;x\in (0,l), \\
D\psi^{\prime \prime }+\varphi-\psi=\gamma\psi,\;\;x\in (0,l), \\
\varphi^{\prime }=\varphi^{\prime }(l)=0, \\
\psi^{\prime }(0)=\psi^{\prime }(l)=0,%
\end{array}%
\right.
\end{equation}
where
\begin{flalign}
Q_1 &=2r''(\widehat{v}_{j}){\widehat{v}_{j}}^{\prime}\widehat{u}_{j}+2r'(\widehat{v}_{j}){\widehat{u}_{j}}^{\prime},\\ \nonumber
Q_2&=r'''(\widehat{v}_{j})(\widehat{v}_j^\prime)^2 \widehat{u}_{j}
+r''(\widehat{v}_{j})(\widehat{v}_{j})^{\prime\prime}\widehat{u}_{j}
+2r''(\widehat{v}_{j}){\widehat{v}_{j}}^{\prime}{\widehat{u}_{j}}^{\prime}
+r'(\widehat{v}_{j}){\widehat{u}_{j}}^{\prime\prime},\\ \nonumber
Q_3&=r''(\widehat{v}_{j})({\widehat{v}_{j}}^{\prime})^2+r'(\widehat{v}_{j}){\widehat{v}_{j}}^{\prime\prime}-\sigma \widehat{u}_{j}+\sigma(1-\widehat{u}_{j}).
\nonumber
\end{flalign}
Moreover, set
\begin{equation*}
\left\{
\begin{array}{l}
\gamma =\gamma_{0}+\epsilon \gamma_{1}+\epsilon^{2}\gamma_{2}+\cdots , \\
\varphi=\varphi _{0}+\epsilon \varphi _{1}+\epsilon^{2}\varphi_{2}+\cdots , \\
\psi=\psi_{0}+\epsilon \psi_{1}+\epsilon^{2}\psi_{2}+\cdots ,%
\end{array}%
\right.
\end{equation*}%
and
\begin{equation*}
\left\{
\begin{array}{l}
\widehat{u}_{j}=u^*+\epsilon u_{1}+\epsilon^{2}u_{2}+\cdots , \\
\widehat{v}_{j}=v^*+\epsilon v_{1}+\epsilon^{2}v_{2}+\cdots , \\
\sigma=\sigma_{0}^j+\epsilon \sigma_{1}^j+\epsilon^{2}\sigma
_{2}^j+\cdots .%
\end{array}%
\right.
\end{equation*}%
Substituting these equations into (\ref{lin5}) and equating the coefficients of $O(1)$ lead to the following system
\begin{equation}
\left\{
\begin{array}{l}
r(1)\varphi_{0}^{\prime \prime }+\frac{r'(1)}{D}(\psi_0-\varphi_0)-\sigma_{0}^j\varphi_{0}=\gamma_{0}\varphi _{0}-\frac{r'(1)}{D}\gamma_0\psi_0,\;\;x\in (0,l), \\
D\psi_{0}^{\prime \prime }+\varphi _{0}-\psi_{0}=\gamma_{0}\psi_{0},\;\;x\in (0,l), \\
\varphi _{0}^{\prime }(0)=\varphi _{0}^{\prime }(l)=0, \\
\psi_{0}^{\prime }(0)=\psi_{0}^{\prime }(l)=0.%
\end{array}%
\right.  \label{cha}
\end{equation}%
By (\ref{index1}), we  replace $(\varphi_{0}^{\prime \prime }, \psi_{0}^{\prime \prime })$ by $-\lambda_m(\varphi_{0}, \psi_{0} )$. Then the existence of a non-zero solution $(\varphi_{0}, \psi_{0} )$ yields the following equation
\begin{equation}
\gamma_{0}^{2}+\left(1+D\lambda_m+\lambda_m r(1)+\sigma_{0}^{j}\right) \gamma_{0}+E=0,
\label{gamma}
\end{equation}
where
\begin{eqnarray*}
\begin{aligned}
E=& \lambda_mr(1)\left(1+D\lambda_m \right)+\lambda_m r'(1)+\sigma_{0}^{j}\left(1+D\lambda_m \right)\\
\ &=-\sigma_{0}^{m}\left(1+D\lambda_m \right)+\sigma_{0}^{j}\left(1+D\lambda_m \right)\\
\ &=\left(1+D\lambda_m \right)\left(\sigma_{0}^{j}-\sigma_{0}^{m}\right),\\
\end{aligned}
\end{eqnarray*}
where $\sigma_0^j$ is given by (\ref{sol2}). By (\ref{max}), if $j\neq i_a$, when the positive integer $m=i_a$ such that $E<0$, then the equation (\ref{gamma}) has a positive root $\gamma_0>0$ which implies that $(\widehat{u}_j, \widehat{v}_j)$ is unstable. So we have a conclusion as follows:
\begin{proposition}\label{prop}
The non-constant steady state $(\widehat{u}_j, \widehat{v}_j)$ in (\ref{pattern3}) is unstable when $j\neq i_a$. In other words, if $(\widehat{u}_j, \widehat{v}_j)$ is stable, then it is necessary  that $j=i_a$.
\end{proposition}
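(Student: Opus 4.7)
The plan is to analyze the principal eigenvalue of the linearized operator (\ref{lin5}) about $(\widehat{u}_j,\widehat{v}_j)$ via the perturbation ansatz already introduced above the statement. At order $O(1)$ in $\varepsilon$, the eigenvalue problem (\ref{lin5}) reduces to the constant-coefficient system (\ref{cha}), which diagonalizes in the eigenbasis $\{\varphi_m\}$ of $-\Delta$ with Neumann boundary conditions. Expanding $(\varphi_0,\psi_0)=\sum_m(\alpha_m,\beta_m)\varphi_m$ and inserting into (\ref{cha}) forces, for each mode $m$, the quadratic characteristic equation (\ref{gamma}) in $\gamma_0$. Hence, to leading order, the stability of $(\widehat{u}_j,\widehat{v}_j)$ is controlled by the signs of the roots of (\ref{gamma}) as $m$ ranges over the nonnegative integers.

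The decisive step is the algebraic identity $E=(1+D\lambda_m)(\sigma_0^j-\sigma_0^m)$, which rewrites the constant term of (\ref{gamma}) in terms of the bifurcation values $\sigma_0^k$ defined by (\ref{sol2}). Since $i_a$ is defined in (\ref{max}) as the maximizer of $k\mapsto \sigma_0^k$ over $k\in\{1,\dots,i^c\}$, whenever $j\ne i_a$ the choice $m=i_a$ gives $\sigma_0^j<\sigma_0^{i_a}$, so $E<0$. By Vieta's formulas applied to (\ref{gamma}), a negative product of the two roots forces one of them to be strictly positive; call this value $\gamma_0^{+}>0$. Standard analytic perturbation theory, applied to this simple isolated leading-order eigenvalue of the constant-coefficient operator at $\varepsilon=0$, then produces a smooth continuation $\gamma(\varepsilon)=\gamma_0^{+}+O(\varepsilon)$ that remains strictly positive for all $|\varepsilon|$ sufficiently small; the corresponding mode of (\ref{lin5}) grows exponentially in $t$, so $(\widehat{u}_j,\widehat{v}_j)$ is linearly unstable. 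This is exactly the contrapositive of the claim.

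The main obstacle, in principle, is justifying the final persistence argument rigorously, since the linearization is not self-adjoint. However, the leading-order operator in (\ref{cha}) is an explicit constant-coefficient elliptic system whose spectrum consists precisely of the pairs of roots of the countable family (\ref{gamma}); under the genericity hypothesis (\ref{con5}) the root $\gamma_0^{+}$ is simple and isolated, so continuity of isolated spectral points under bounded perturbations (equivalently, the implicit function theorem applied to the characteristic function at $\gamma_0^{+}$) suffices to carry the positivity across small $\varepsilon$. No technical novelty beyond the tools already used in Section~\ref{sec2} is required.
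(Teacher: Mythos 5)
Your proposal is correct and follows essentially the same route as the paper: reduce the linearization at order $O(1)$ to the constant-coefficient system, obtain the characteristic quadratic for each mode $m$, rewrite its constant term as $E=(1+D\lambda_m)(\sigma_0^j-\sigma_0^m)$, and choose $m=i_a$ so that $E<0$ forces a positive root and hence instability. The only difference is that you explicitly justify the persistence of the positive leading-order eigenvalue for small $\varepsilon$, a step the paper leaves implicit.
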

We shall derive a sufficient condition for the stability of the non-constant steady states with the \emph{admissible } wave mode $i_a$. Through a simple calculation, we find that the principal eigenvalue of (\ref{cha}) is $\gamma_0=0$ with the eigenfunction
\begin{equation*}
(\varphi _{0}, \psi_{0})=((1+D\lambda_{i_a})\varphi_{i_a}, \varphi_{i_a}).
\end{equation*}
Next, we compute $\gamma_1$. Again carrying out the computation of obtaining (\ref{cha}) and equating the $O(\varepsilon)$ terms lead to
\begin{equation}
\left\{
\begin{array}{l}
r(1)\varphi _{1}^{\prime \prime }+\frac{r'(1)}{D}(\psi_1-\varphi_1)-\sigma_{0}^j\varphi_{1}=\gamma_{1}\varphi _{0}-\frac{r'(1)}{D}\gamma_1\psi_0+G_3,\;\;x\in (0,l), \\
D\psi_{1}^{\prime \prime }+\varphi _{1}-\psi_{1}=\gamma_{1}\psi_{0},\;\;x\in (0,l), \\
\varphi _{1}^{\prime }(0)=\varphi _{1}^{\prime }(l)=0, \\
\psi_{1}^{\prime }(0)=\psi_{1}^{\prime }(l)=0,%
\end{array}%
\right.  \label{cha1}
\end{equation}%
where
\begin{equation*}
\begin{aligned}
G_{3}=& -r'(1)v_{1}\varphi_{0}^{\prime \prime }-r'(1)u_{1}\psi_{0}^{\prime \prime }-r''(1)v_{1}\psi_{0}^{\prime \prime }-2r'(1)v_{1}^{\prime}\varphi_{0}^{\prime}-2r''(1)v_1'\psi_0' -2r'(1)u_1'\psi_0'\\
 \ &-r''(1)v_1''\psi_0-r'(1)u_1''\psi_0-r'(1)v_1''\varphi_0+\sigma_{1}^{i_a}\varphi_0+2\sigma_{0}^{i_a}u_1\varphi_0.\\
\end{aligned}
\end{equation*}
Applying the solvability condition of (\ref{cha1}), we have
\begin{equation}\label{eig2}
\int_{0}^{l}\left[\gamma_{1}\varphi _{0}-\frac{r'(1)}{D}\gamma_{1}\psi_{0}+G_{3}\right]\overline{u}_{2}dx+\int_{0}^{l}\gamma_{1}\psi_{0}\overline{v}_{2}dx=0,
\end{equation}%
where $(\overline{u}_{2}, \overline{v}_{2} )$ is given by (\ref{sol4}) with $j=i_a$. Solving (\ref{eig2}) for $\gamma_1$, we have
\begin{equation*}
\gamma_{1}=-\frac{\int_{0}^{l}G_{3}\overline{u}_{2}dx}{\int_{0}^{l}\left(\varphi _{0}\overline{u}_2+\psi_{0}\overline{v}_{2}-\frac{r'(1)}{D}\psi_{0}\overline{u}_{2}\right)dx}.
\end{equation*}%
A direct calculation yields
$$
\int_{0}^{l}G_{3}\overline{u}_{2}dx=0
$$
and
\begin{eqnarray} \label{eig3}
\begin{aligned}
\int_{0}^{l}\left(\varphi _{0}\overline{u}_2+\psi_{0}\overline{v}_{2}-\frac{r'(1)}{D}\psi_{0}\overline{u}_{2}\right)dx
&= \int_{0}^{l}\left(a(i_a)c(i_a)-\frac{r'(1)c(i_a)}{D}+1\right)\varphi_{i_a}^{2}dx \\
&=\frac{l}{2}\left(a(i_a)c(i_a)-\frac{r'(1)c(i_a)}{D}+1\right)\\
&=\frac{l}{2}\left(\frac{D(1+D\lambda_{i_a})^2}{r'(1)}-D\lambda_{i_a}\right)<0.\\
\end{aligned}
\end{eqnarray}
Due to $\gamma_1=0$, we need further to compute $\gamma_2$. We first simplify $G_3$ as
\begin{equation*}
G_{3}=\sigma_{0}^{i_a}a^{2}(i_{a})+\Big[ 4r'(1)a(i_a)\lambda_{i_a}+2
r''(1)\lambda_{i_a}+\sigma_{0}^{i_a}a^{2}(i_{a})\Big] \cos \Big(2\sqrt{\lambda_{i_a}}x\Big).
\end{equation*}
By this, a particular solution of (\ref{cha1}) ($\varphi_{1}, \psi_{1})$  is of the following form
\begin{equation*}
\left\{
\begin{array}{c}
\varphi_{1}=\bar{a}_{1}+\bar{a}_{2}\cos (2\sqrt{\lambda_{i_a}}x), \\
\psi_{1}=\bar{a}_{3}+\bar{a}_{4}\cos (2\sqrt{\lambda_{i_a}}x)
\end{array}
\right.
\end{equation*}
with
\begin{equation*}
\bar{a}_{i}=2d_{i}(i_{a}),  \  \    i=1, 2, 3, 4,
\end{equation*}
where $d_{i}(i_{a})$ is given by (\ref{d1234}). Again we use the same computation of obtaining (\ref{cha}), but now equate the $O(\epsilon^2)$ terms, and then get the following  system including $\gamma_2$
\begin{equation}
\left\{
\begin{array}{l}
r(1)\varphi_{2}^{\prime \prime }+\frac{r'(1)}{D}(\psi_2-\varphi_2)-\sigma_{0}^{i_a}\varphi_{2}=\gamma_{2}\varphi _{0}-\frac{r'(1)}{D}\gamma_2\psi_0+G_4,\;\;x\in (0,l), \\
D\psi_{2}^{\prime \prime }+\varphi_{2}-\psi_{2}=\gamma_{2}\psi_{0},\;\;x\in (0,l), \\
\varphi_{2}^{\prime }(0)=\varphi _{2}^{\prime }(l)=0, \\
\psi_{2}^{\prime }(0)=\psi_{2}^{\prime }(l)=0,%
\end{array}%
\right.  \label{cha2}
\end{equation}
where
\begin{eqnarray*}
\begin{aligned}
G_{4}=& -r'(1)v_{1}\varphi_{1}^{\prime\prime}-r'(1)v_2\varphi_{0}^{\prime\prime}
-\frac{1}{2}r''(1)v_{1}^{2}\varphi_{0}^{\prime\prime}-r'(1)u_1\psi_{1}^{\prime\prime}
-r'(1)u_2\psi_{0}^{\prime\prime}-r''(1)v_1\psi_{1}^{\prime\prime}\\
\ &-r''(1)v_1u_1\psi_{0}^{\prime\prime}-r''(1)v_2\psi_{0}^{\prime\prime}
-\frac{1}{2}r'''(1){v_1}^{2}\psi_{0}^{\prime\prime}-2r'(1)v_{1}^{\prime}\varphi_{1}^{\prime}
-2r'(1))v_{2}^{\prime}\varphi_{0}^{\prime}-2r''(1)v_1v_{1}^{\prime}\varphi_{0}'\\
\ &-2r''(1)v_{1}^{\prime}\psi_{1}^{\prime}
-2r''(1)v_{1}^{\prime}u_{1}\psi_{0}^{\prime}-2r''(1)v_{2}^{\prime}\psi_{0}^{\prime}
-2r'''(1)v_{1}^{\prime}v_{1}\psi_{0}^{\prime}
-2r'(1)u_{1}^{\prime}\psi_{1}^{\prime}-2r'(1)u_{2}^{\prime}\psi_{0}^{\prime}\\
\ &-2r''(1)u_{1}^{\prime}v_{1}\psi_{0}^{\prime}-r'''(1){v_{1}^{\prime}}^{2}\psi_{0}
-r''(1)v_{1}^{\prime\prime}\psi_{1}-r''(1)v_{1}^{\prime\prime}u_{1}\psi_{0}
-r''(1)v_{2}^{\prime\prime}\psi_{0}-r'''(1)v_{1}^{\prime\prime}v_{1}\psi_{0}\\
\ &-2r''(1)v_{1}^{\prime}u_{1}^{\prime}\psi_{0}-r'(1)u_{1}^{\prime\prime}\psi_{1}-r'(1)u_{2}^{\prime\prime}\psi_{0}
-r''(1)u_{1}^{\prime\prime}v_{1}\psi_{0}-r''(1){v_{1}^{\prime}}^{2}\varphi_{0}-r'(1)v_{1}^{\prime\prime}\varphi_{1}\\
\ &-r'(1)v_{2}^{\prime\prime}\varphi_{0}-r''(1)v_{1}^{\prime\prime}v_{1}\varphi_{0}+2\sigma_{0}^{i_a}u_1\varphi_1
+2\sigma_{0}^{i_a}u_2\varphi_0+\sigma_{2}^{i_a}\varphi_0.\\
\end{aligned}
\end{eqnarray*}
Again using the solvability condition of (\ref{cha2}), we have
\begin{equation}
\gamma_{2}=-\frac{\int_{0}^{l}G_{4}\overline{u}_{2}dx}{\int_{0}^{l}\left(\varphi _{0}\overline{u}_2+\psi_{0}\overline{v}_{2}-\frac{r'(1)}{D}\psi_{0}\overline{u}_{2}\right)dx}
\label{gamma2}
\end{equation}
and
$$
\int_0^lG_4\overline{u}_2dx=cl\eta,
$$
where
\begin{eqnarray*}
\begin{aligned}
\eta=& \frac{1}{4}r'(1)\lambda_{i_a}\left[6d_1+3d_2+(6d_3+3d_4)a(i_a)\right]
+\frac{1}{16}r''(1)\lambda_{i_a}\left[24d_3+12d_4+9a(i_a)\right]\\
&+\frac{3}{16}r'''(1)\lambda_{i_a}
+3\sigma_{0}^{i_a}a_{i_a}\left[d_{1}+\frac{1}{2}d_{2}\right]+\frac{1}{2}\sigma_{2}^{i_a}a(i_a).
\end{aligned}
\end{eqnarray*}
 In view of (\ref{eig3}), we know that the stability of $(\widehat{u}_{i_a}, \widehat{v}_{i_a} )$ completely depends on the sign of $\eta$, and thus we have the result below.
\begin{theorem}\label{th2}
If the positive integer $i_a$ is the admissible wave mode, then the  small-amplitude steady state $(\widehat{u}_{i_a}, \widehat{v}_{i_a} )$ of system $(\ref{model223})$ is stable provided that
\begin{equation}
\eta>0. \label{conlast}
\end{equation}
\end{theorem}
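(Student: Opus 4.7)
The plan is to continue the perturbation expansion already set up in the lead-up to the theorem and to read off the sign of the principal eigenvalue $\gamma$ of the linearization at $(\widehat{u}_{i_a},\widehat{v}_{i_a})$ from its first nontrivial correction $\varepsilon^{2}\gamma_{2}$. Since $\gamma_{0}=0$ and $\gamma_{1}=0$ have already been established, the remaining task is to compute $\gamma_{2}$ via a Fredholm solvability condition at order $\varepsilon^{2}$ and to show that its sign is dictated by $\eta$.

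First I would verify that $\gamma_{0}=0$ is genuinely the principal eigenvalue of the $O(1)$ problem (\ref{cha}), not merely one of many. Substituting Fourier modes $(\varphi_{0},\psi_{0})\propto(a(m)\varphi_{m},\varphi_{m})$ into (\ref{cha}) reduces it to the quadratic (\ref{gamma}) whose constant term is $E=(1+D\lambda_{m})(\sigma_{0}^{i_{a}}-\sigma_{0}^{m})$. By the definition of $i_{a}$ in (\ref{max}), $E\geq 0$ for every $m$ with equality only at $m=i_{a}$, while the linear coefficient is strictly positive; hence both roots of (\ref{gamma}) satisfy $\mathrm{Re}\,\gamma_{0}\leq 0$, vanishing precisely at $m=i_{a}$. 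This pins down the neutral mode $(\varphi_{0},\psi_{0})=(a(i_{a})\varphi_{i_{a}},\varphi_{i_{a}})$ and legitimizes the perturbation calculation.

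Next, the solvability condition (\ref{eig2}) applied to (\ref{cha1}) against the adjoint eigenfunction $(\overline{u}_{2},\overline{v}_{2})$ given by (\ref{sol4}) with $j=i_{a}$ collapses to $\gamma_{1}\cdot(\text{integral in (\ref{eig3})})=-\int_{0}^{l}G_{3}\overline{u}_{2}\,dx$. A direct computation using the explicit forms (\ref{sol3}) for $(u_{1},v_{1})$ shows that $G_{3}$ is orthogonal to $\overline{u}_{2}$, so together with the strict negativity of (\ref{eig3}) this yields $\gamma_{1}=0$; one then solves (\ref{cha1}) explicitly to obtain the particular solution with Fourier coefficients $\overline{a}_{i}=2d_{i}(i_{a})$. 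At order $\varepsilon^{2}$, the same procedure applied to (\ref{cha2}) produces the formula (\ref{gamma2}) for $\gamma_{2}$, whose denominator is precisely the strictly negative quantity in (\ref{eig3}). Consequently $\gamma_{2}$ has the same sign as $\int_{0}^{l}G_{4}\overline{u}_{2}\,dx$, and after collecting resonant terms this integral reduces to $cl\eta$ with a prefactor $c$ whose sign makes $\gamma_{2}<0$ equivalent to $\eta>0$. Since $\gamma=\varepsilon^{2}\gamma_{2}+o(\varepsilon^{2})$ governs the principal eigenvalue, $\eta>0$ then delivers linear stability of $(\widehat{u}_{i_{a}},\widehat{v}_{i_{a}})$ for sufficiently small $\varepsilon$.

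The main obstacle is the bookkeeping required to evaluate $\int_{0}^{l}G_{4}\overline{u}_{2}\,dx$. The expression $G_{4}$ contains more than two dozen terms involving products of $r',r'',r'''$ at $1$ with derivatives of $u_{1},v_{1},u_{2},v_{2},\varphi_{1},\psi_{1}$; each must be expanded as a trigonometric polynomial in $\sqrt{\lambda_{i_{a}}}\,x$, and only the $\varphi_{i_{a}}$-resonant pieces survive projection against $\overline{u}_{2}$. Any sign error in the coefficients $d_{k}(i_{a})$ or in the prefactor $c$ would flip the stability criterion, so this step demands great care. A secondary concern is to justify that the formal $\varepsilon$-series for $\gamma$ truly governs the principal eigenvalue of the full linearization; this follows from the analytic dependence of the linearized operator on $\varepsilon$ along the local branch constructed in Theorem \ref{th1}, via a standard Lyapunov--Schmidt reduction around the simple zero eigenvalue isolated in the first paragraph.
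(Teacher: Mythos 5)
Your proposal follows essentially the same route as the paper: expand the principal eigenvalue as $\gamma=\gamma_0+\epsilon\gamma_1+\epsilon^2\gamma_2+\cdots$, find $\gamma_0=0$ (with the other modes damped since $E=(1+D\lambda_m)(\sigma_0^{i_a}-\sigma_0^m)\geq 0$ by maximality of $\sigma_0^{i_a}$), obtain $\gamma_1=0$ from the Fredholm solvability condition at order $\epsilon$, and read the sign of $\gamma_2$ from $\int_0^l G_4\overline{u}_2\,dx=cl\eta$ against the strictly negative denominator \eqref{eig3}, with $c<0$ turning $\gamma_2<0$ into $\eta>0$. Your added remarks on confirming that $\gamma_0=0$ is genuinely the principal eigenvalue and on justifying the formal series via a Lyapunov--Schmidt reduction are sensible refinements of, not departures from, the paper's argument.
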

\section{Numerical simulation}\label{sec4}
This section is devoted to presenting some numerical examples to demonstrate the theoretical results obtained in Sections \ref{sec2} and \ref{sec3}. The model is solved with the MATLAB pde solver based on the finite difference scheme. For the sake of brevity, only the numerical results of the solution component $u$ are presented here. We take the length of the spatial interval $l=20$, the diffusion coefficient of AHL $D=1$ and the motility function
$$
r(v)=\frac{1}{1+e^{8(v-1)}},
$$
which obviously satisfies the condition (\ref{con1}). The small parameter $\epsilon=0.01$ is always fixed in this section.

In Section \ref{sec2}, Theorem \ref{th11} shows that each bifurcation $\Gamma_j$ emanating from $(\sigma_0^j, \omega^*), j=1, 2, \cdot\cdot\cdot, i^c$ goes backward and meets the vertical axis (i.e., $\sigma=0$), but we do not know whether  $\Gamma_j$  directly joins with $\sigma=0$ or meets some bifurcation points and then reaches $\sigma=0$; furthermore, for $\sigma=\sigma_0^j$ the existence of non-constant steady states is not established in our theorem. Therefore, we can only give the local bifurcation diagram of this example by together with Proposition \ref{prop} and Theorem \ref{th2}. Through a computation, we have $i^c=11$, $\sigma_c=0.5$, $i_a=6$ such that $\sigma_{max}=\sigma_0^6=0.4967<\sigma_c$, and $\eta=10.3042$; Moreover,
\begin{equation}\label{nu1}
\sigma_0^j>0 \  \text{for the integer } \  j\in [1, 11],  \  \sigma_0^j<0  \  \text{for }  \  j\geq 12, \ \  \text{and}  \   \   \sigma_2^j<0   \   \text{for}  \  j\in [1, 11].
\end{equation}
 Therefore, all the bifurcations are backward which clarify the results in Theorem \ref{th11}. By Proposition \ref{prop} and Theorem \ref{th2}, we have that the sixth bifurcating branch is stable and the remaining ten ones are unstable. Furthermore, all the bifurcation values can be put in order as
$$
\sigma_0^1<\sigma_0^{11}<\sigma_0^2<\sigma_0^{10}<\sigma_0^3<\sigma_0^9<\sigma_0^4<\sigma_0^8<\sigma_0^5<\sigma_0^7<\sigma_0^6.
$$
Based on the above discussion and the equation $(\ref{amplitude})$, the bifurcation diagram are presented in Figure \ref{fig1}. In order to make the diagram look cleaner, we only depict the indication of six branches, that is,  the bifurcation parameter $\sigma$ is close to the following six bifurcation points
$$
\sigma_0^6=0.4967, \sigma_0^7=0.4901, \sigma_0^8=0.4350, \sigma_0^9=0.3337,  \sigma_0^{10}=0.1895, \sigma_0^{11}=0.0054;
$$
accordingly,
$$
\sigma_{2}^{6}=-5.4569, \sigma_{2}^{7}=-8.5523, \sigma_{2}^{8}=-13.4555, \sigma_{2}^{9}=-21.4103, \sigma_{2}^{10}=-34.1442, \sigma_{2}^{11}=-54.0143.
$$
\begin{figure}[htbp]
\centering
\includegraphics[height=6cm,width=10cm]{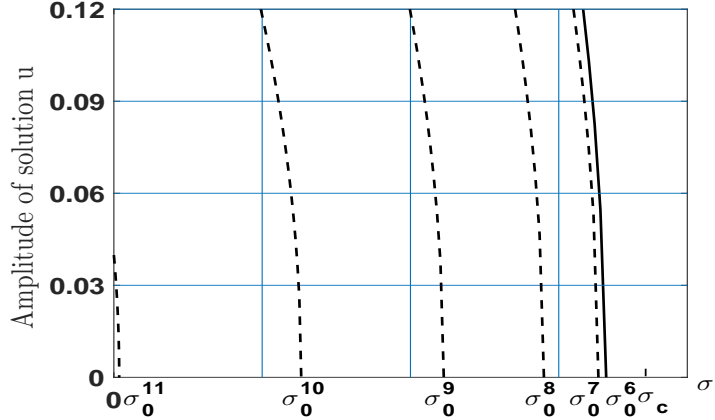} 
\caption{Local bifurcation diagram: $\sigma_{0}^{i_a}=\sigma_{max}, i_{a}=6$. The solid curve means that steady states on this branch are stable. The dotted curves show that steady states on these branches are unstable.}
\label{fig1}
\end{figure}

Next we numerically verify the asymptotic expressions of patterns in (\ref{pattern3}) and the selection mechanism of stable mode of steady states with small amplitudes (i.e., pattern solutions with small amplitudes) established in Proposition \ref{prop} and Theorem \ref{th2}. By the analytical results, the stable wave mode is $j=i_a=6$, and thus the wave number of stationary pattern is $3$, i.e., there are $3$ peaks; moreover, the second-order approximation of the pattern solution in (\ref{pattern3}) is  specified by
\begin{eqnarray}\label{pattern4}
\left\{ \begin{array}{ccc}
\begin{aligned}
&\widehat{u}_{i_a}\approx 1+\epsilon 1.8883 \cos (0.9425 x)+\varepsilon ^2(-1.7828+8.1736 \cos (1.885 x)), \\
&\widehat{v}_{i_a} \approx 1+\epsilon \cos (0.9425 x)+\epsilon^2(-1.7828+1.7952 \cos (1.885 x)),
\end{aligned}
\end{array} \right.
\end{eqnarray}
which is plotted in $(a)$ of Figure \ref{fig2}.
\begin{figure}[h]
\centering \includegraphics[height=4cm, width=6.0cm]{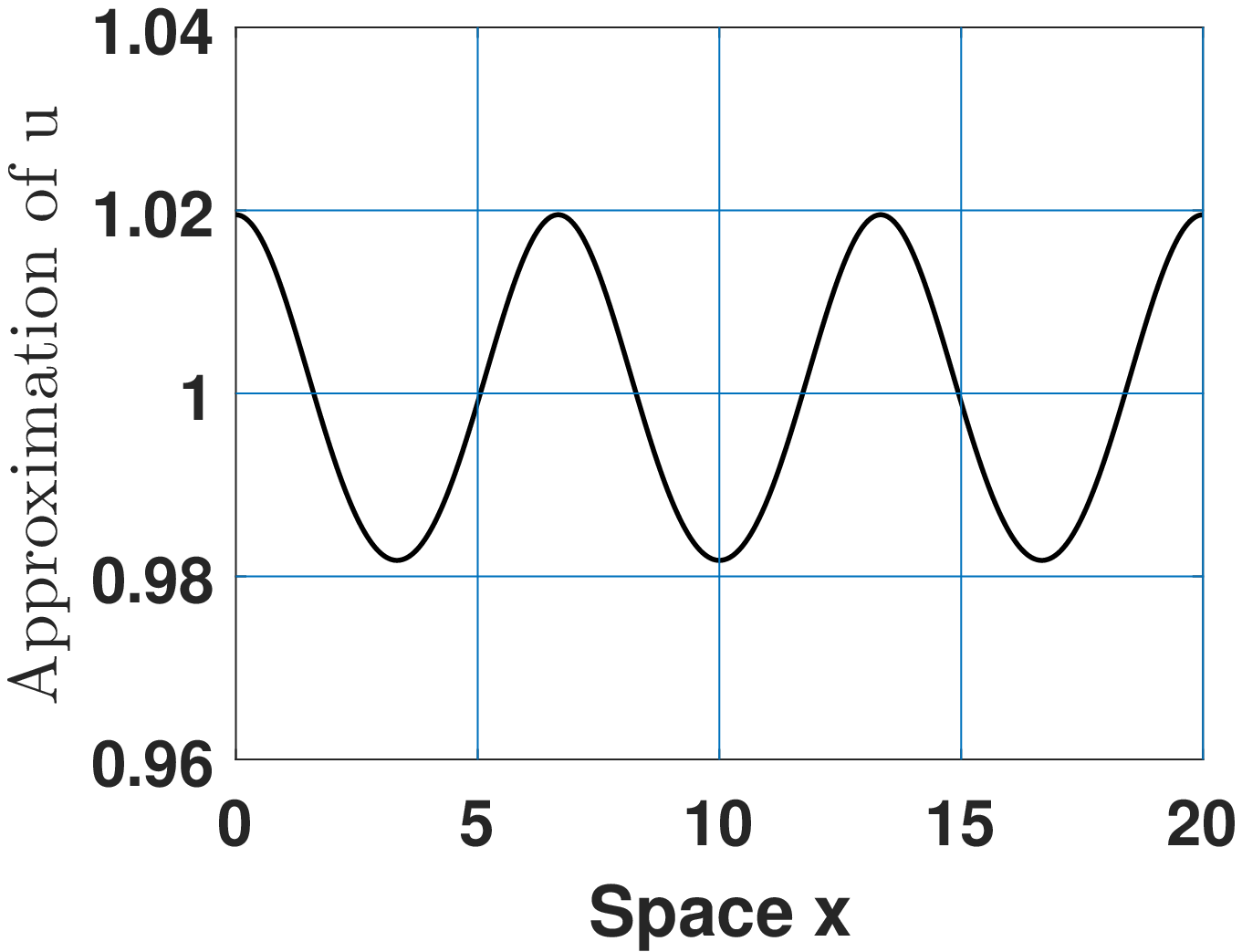} %
\includegraphics[height=4cm, width=6.0cm]{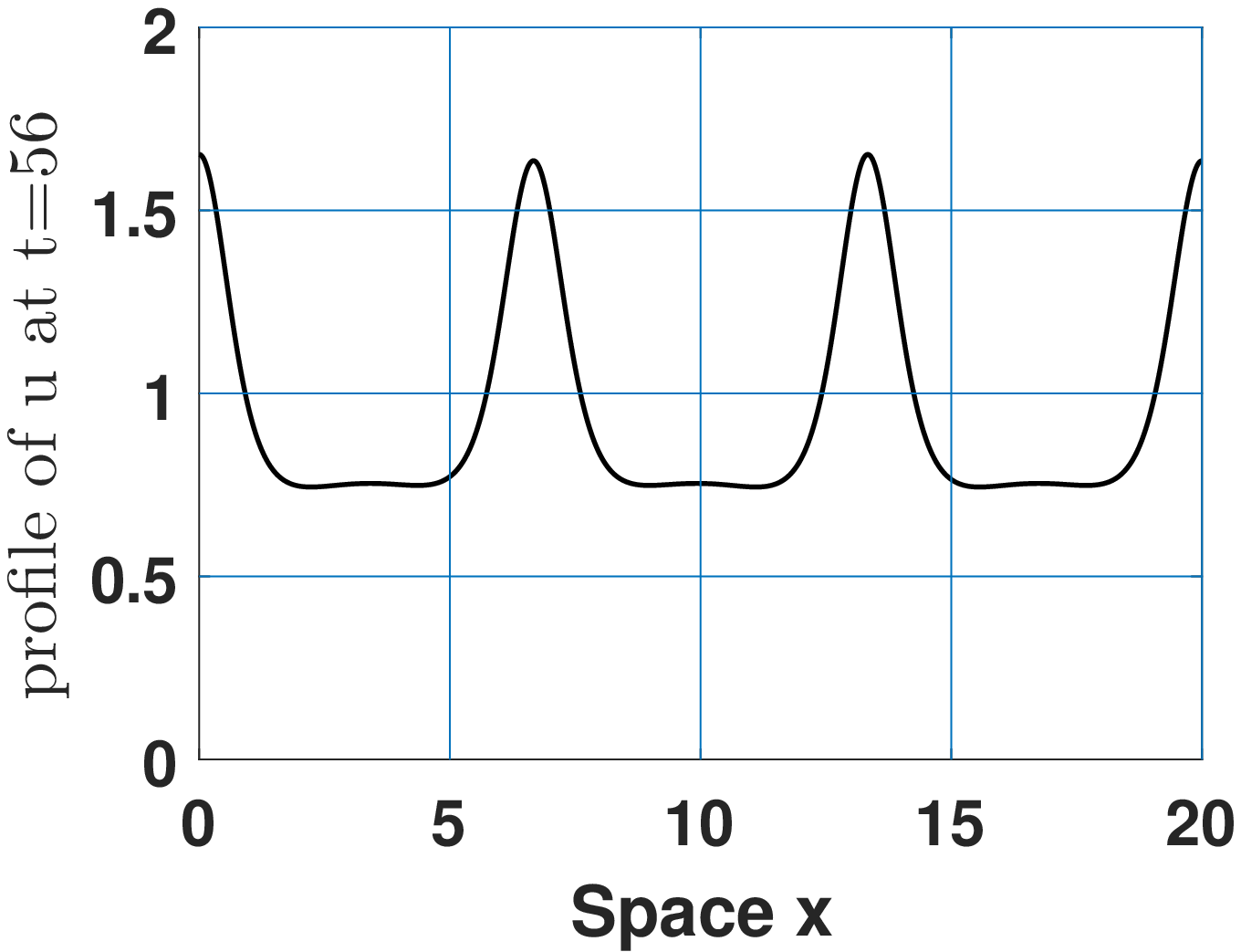} %
\hspace{5cm}(a) \hspace{7cm} (b)
\par
\par
\vspace{0.5cm}
\includegraphics[height=4cm,width=6.0cm]{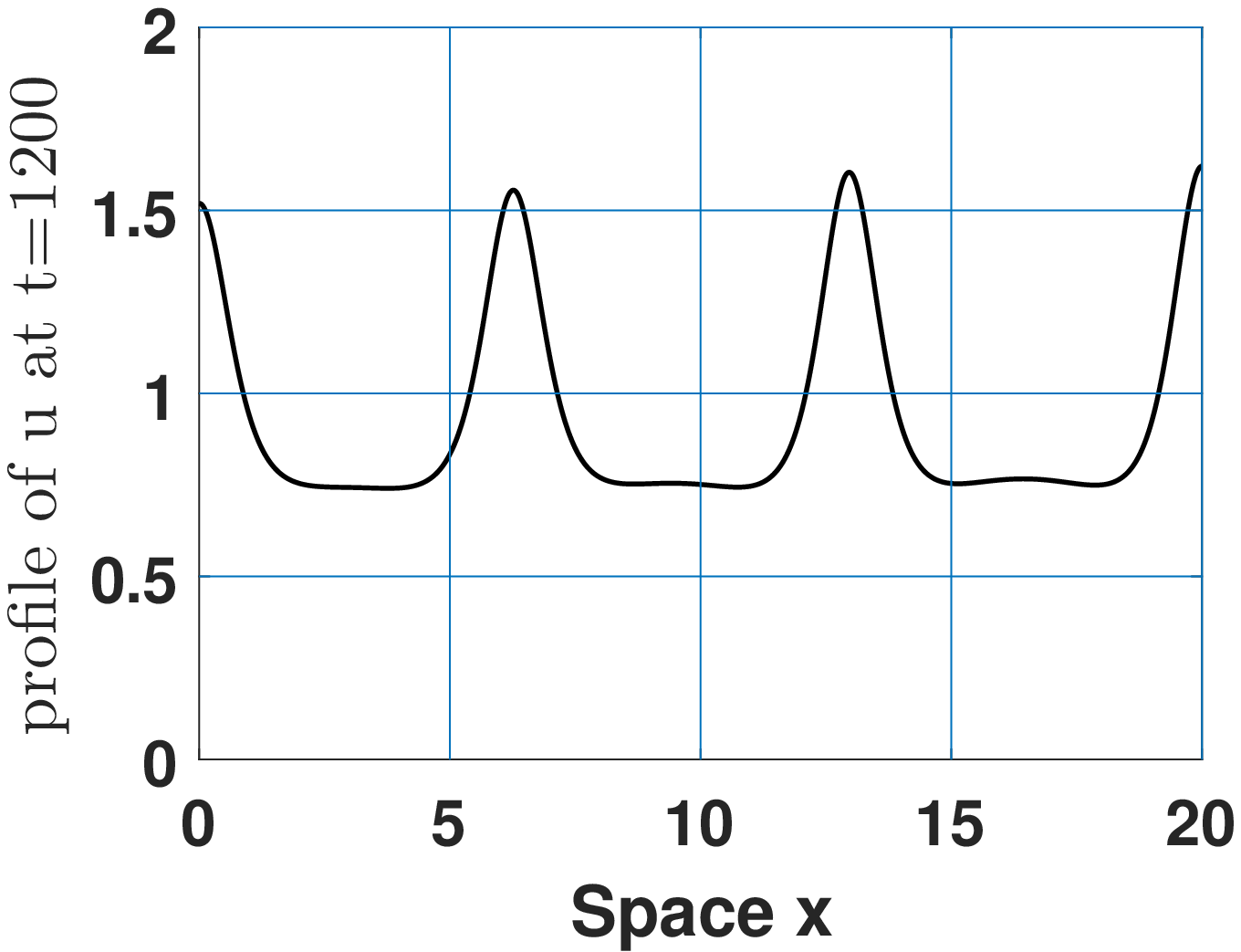} %
\includegraphics[height=4cm,width=6.0cm]{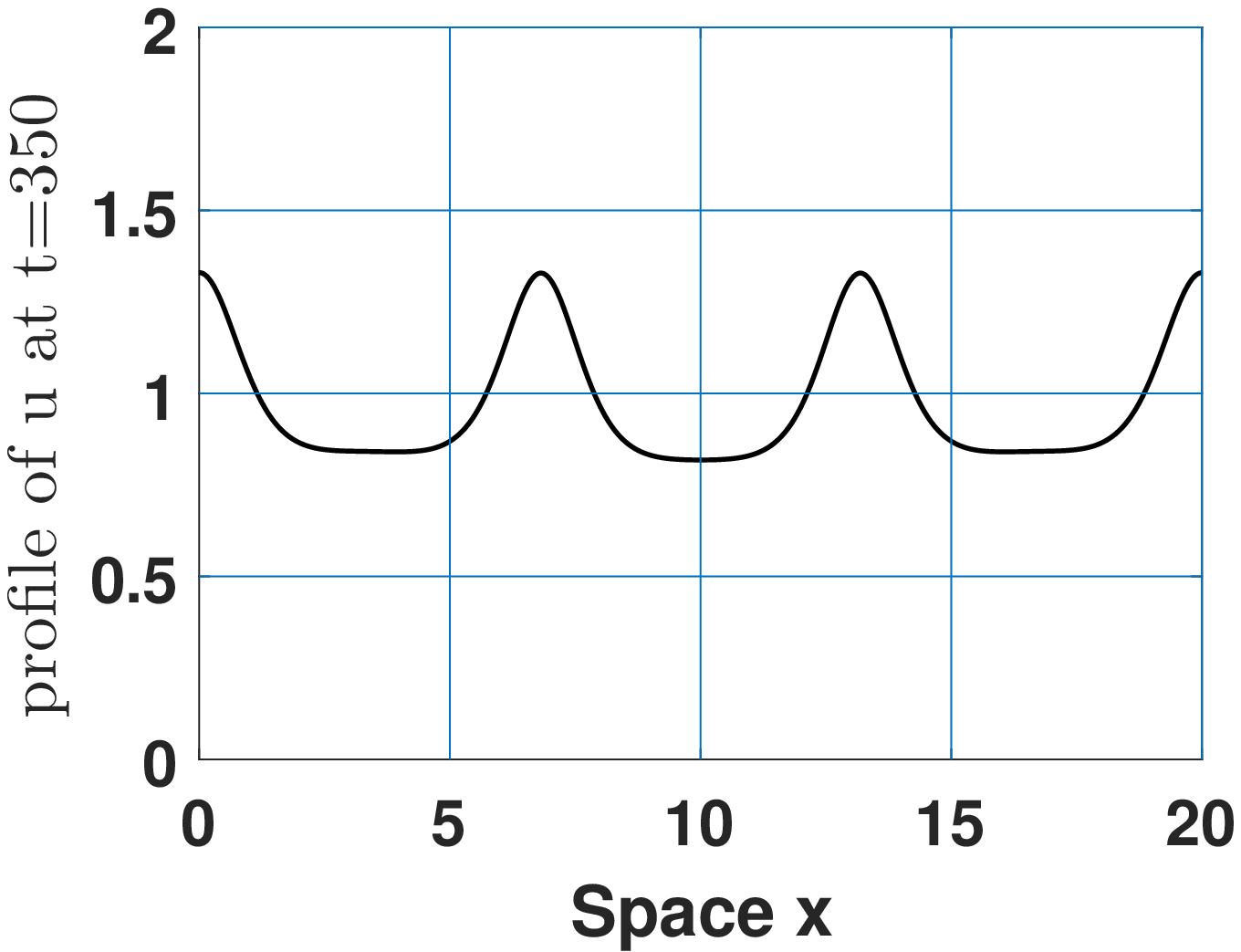} %
\hspace{5cm} (c) \hspace{7 cm} (d)
\caption{ Comparison between the asymptotically approximate solution at $O(\epsilon^3)$ and numerical solutions of (\ref{model223}) with different values of parameter $\sigma$ and different initial functions $(u_0, v_0)$ which are perturbations of the spatially homogeneous steady state $(1, 1)$.
$(a)$ The stationary state (\ref{pattern4}). The other three pattern solutions are from simulations by directly integrating the model (\ref{model}): $(b)$ $(u_0, v_0)$ is the second-order approximation of (\ref{pattern3}) with $j=3$ and $a(3)$ replaced by $1.5 a(3)$, $\sigma=0.3$; \ $(c)$  $(u_0, v_0)$ is the second-order approximation of (\ref{pattern3}) with $j=4$,  $\sigma=0.3$; \  $(d)$ $(u_0, v_0)$ is the second-order approximation of (\ref{pattern3}) with $j=4$,  $\sigma=0.4$.}
\label{fig2}
\end{figure}

In Figure \ref{fig2}, we show the comparison between the stationary state (\ref{pattern4}) predicted by the asymptotical analysis and the stationary states reached starting from different initial functions computed by numerically solving the system (\ref{model223}). It is seen that all the three simulation solutions $(b)-(d)$ are qualitatively in very good agreement with the analytical solution $(a)$, that is, all of them have three peaks (each of two boundary peaks evidently only counts as a half peak). The quantitative discrepancy is caused by ignoring the higher order terms of (\ref{pattern4}). It is also demonstrated that the bifurcation branch emanating from the maximum bifurcation point (i.e., the sixth one here) is stable since all the three simulation solutions $(b)-(d)$ tends to the steady state with wave mode $j=6$ after about running time $t=56, 1200$ and $350$, respectively. This verifies the stability criterion established in Proposition \ref{prop} and Theorem \ref{th2}.

In Figure \ref{fig3}, we show the evolution of pattern starting from different initial locations. it is also observed that the sixth bifurcation branch is stable which is precisely predicted by our analytical results in Proposition \ref{prop} and Theorem \ref{th2}. We see that solutions starting from different perturbation of the constant state $(1,1)$ finally reach the stationary pattern (i.e, the sixth bifurcation branch) which possesses 3 wave peaks (i.e., the wave mode $j=6$).
\begin{figure}[h]
\centering \includegraphics[height=6cm,width=5.0cm]{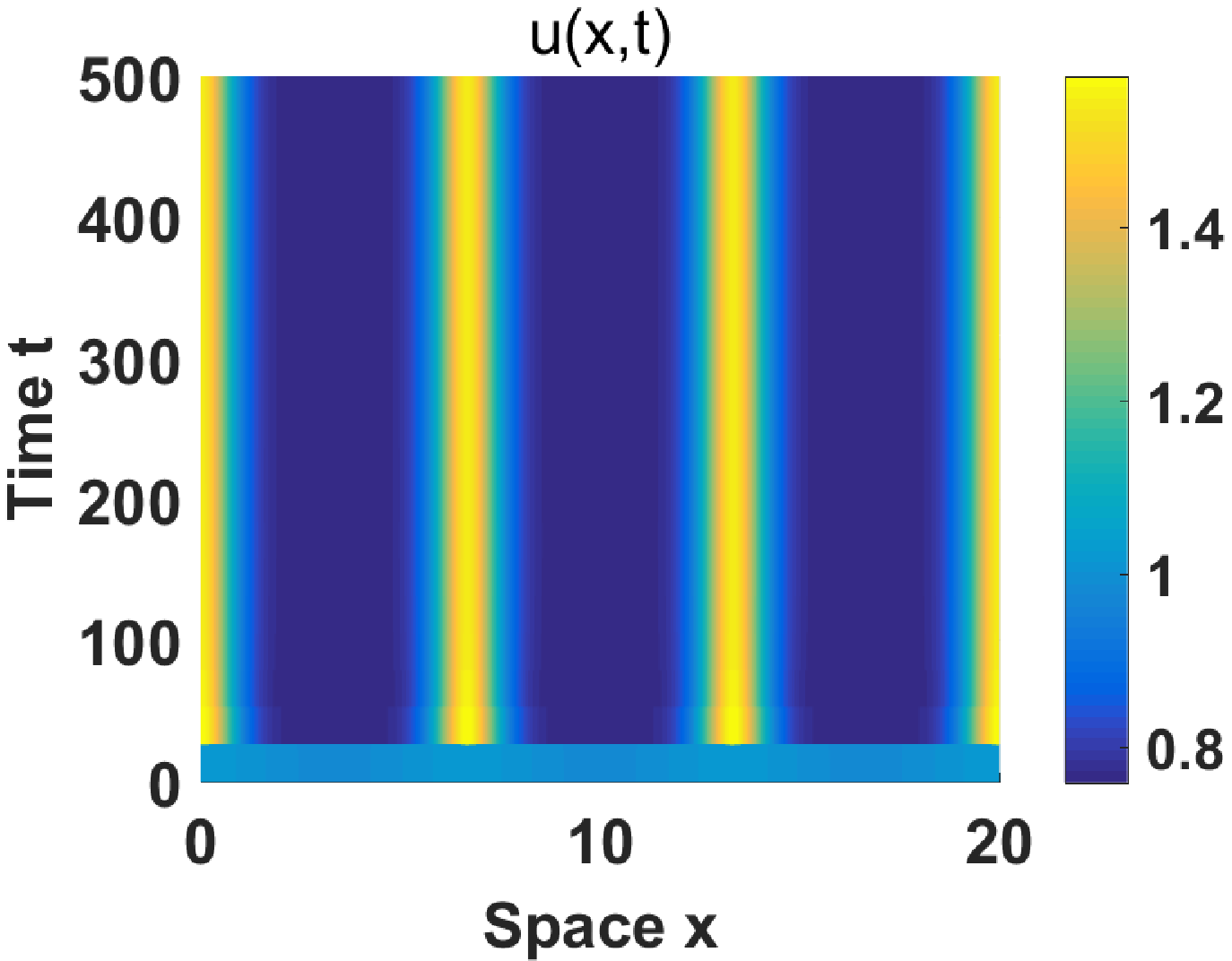} %
\includegraphics[height=6cm,width=5.0cm]{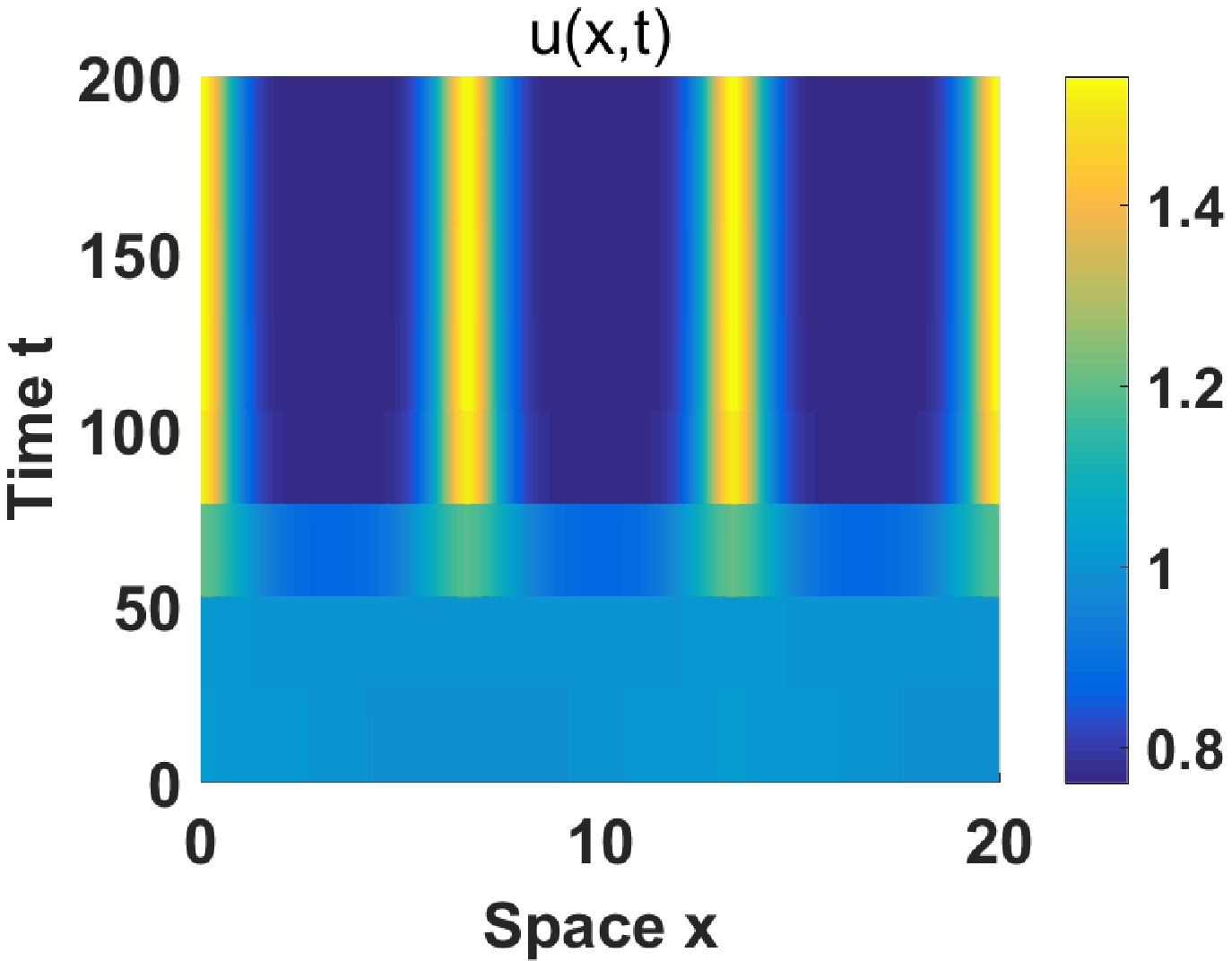} %
\includegraphics[height=6cm,width=5.0cm]{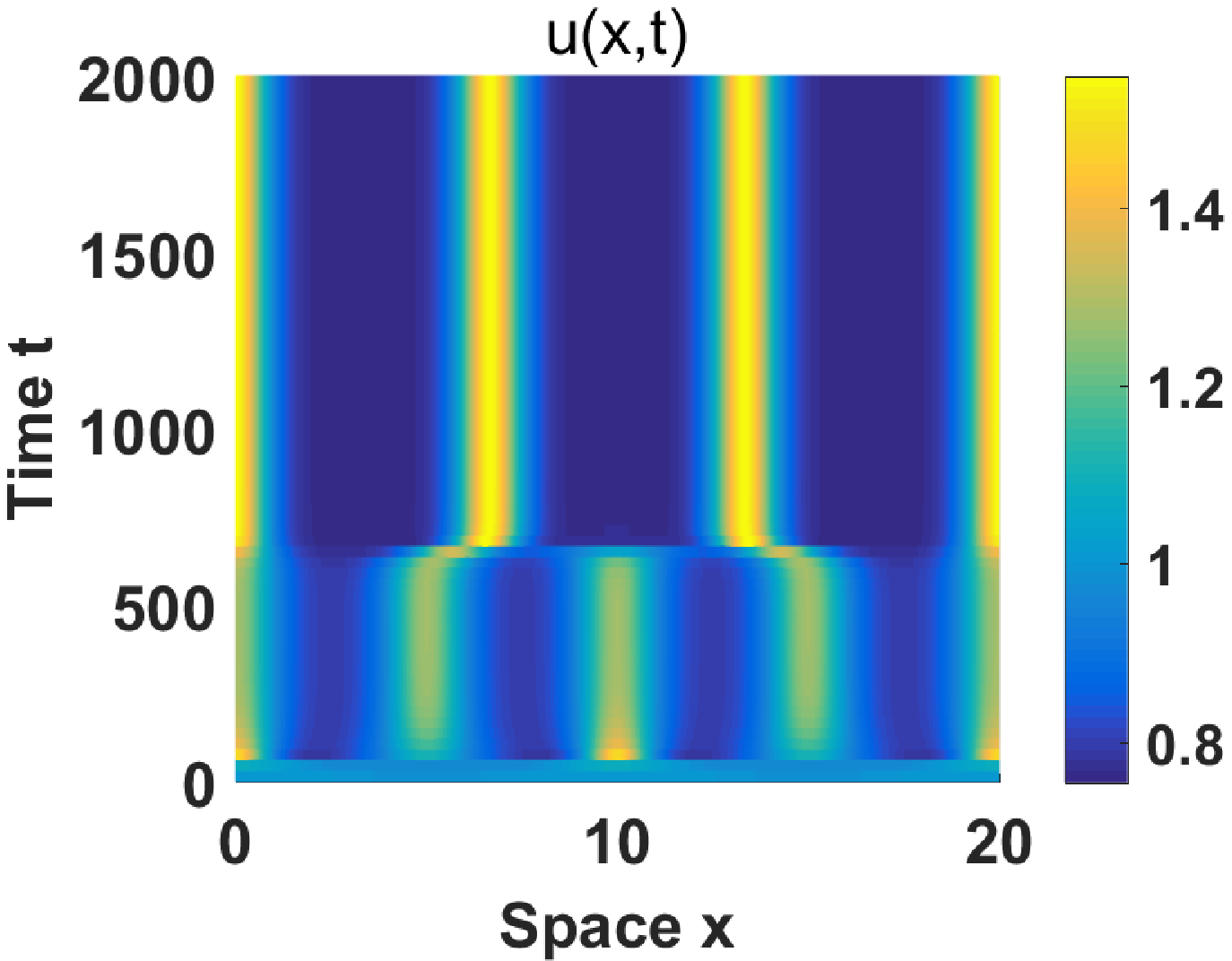} %
\hspace{1cm} (i) \hspace{4.5cm} (ii)\hspace{4.5cm} (iii)
\caption{Pattern formation of model (\ref{model223}) for  $\sigma=0.32$ and different initial functions $(u_0, v_0)$.
$(i)$ $(u_0, v_0)$ is the second-order approximation of (\ref{pattern3}) with $j=6$; $(ii)$  $(u_0, v_0)$ is the second -order approximation of (\ref{pattern3}) with $j=3$ and $a(3)$ replaced by $1.5 a(3)$; \ $(iii)$  $(u_0, v_0)$ is the second-order approximation of (\ref{pattern3}) with $j=4$ and $a(4)$ replaced by $1.2 a(4)$.}
\label{fig3}
\end{figure}

Figure \ref{fig4} displays the process of pattern formation from the initial state to the stable steady state of Figure \ref{fig3} (iii). The initial state has $2$ wave peaks (i.e, the wave mode is $j=4$), see the  left-most one in the top line. Since the mode $4$ steady state is unstable, as shown in the middle one of the top line, it is followed by the pattern having $4$ wave peaks (i.e, $j=8$) at about $t=60$. Again due to the unstability of pattern with mode $j=8$ predicted by Proposition \ref{prop}, as we see from the  left-most one in the bottom line,  at about $t=660$ merging phenomenon appears, then this solution transits to a stable state with  mode $j=6$ by reducing the $3$ wave peaks in the interior of the domain to $2$ ones. This transition is complete at about $t=750$, and then the solution keeps the state with mode $j=6$ that is stable. This once again confirms the stability criterion stated in Proposition \ref{prop} and Theorem \ref{th2}.
\begin{figure}[h]
\centering \includegraphics[height=12cm,width=17.0cm]{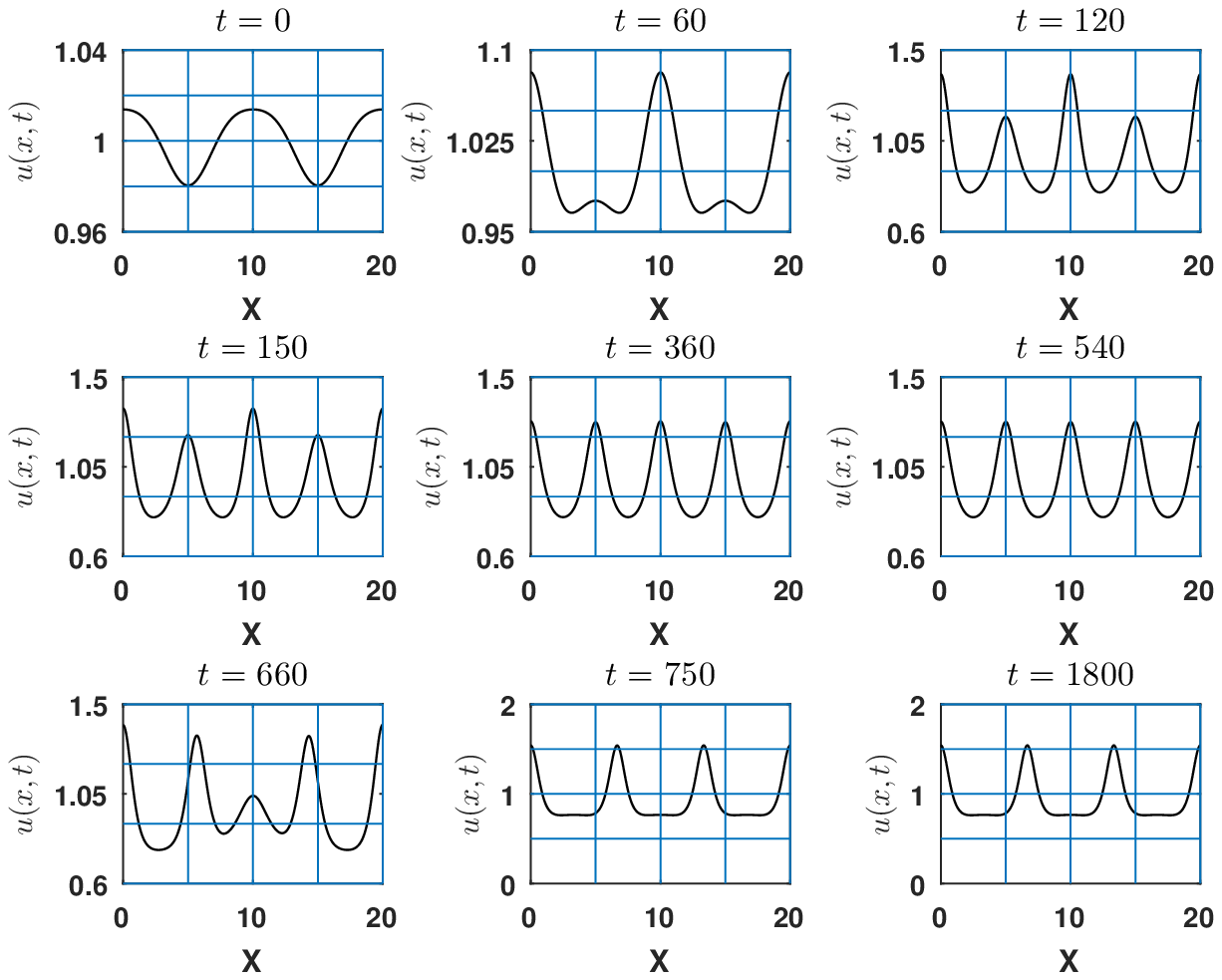} %
\caption{ Numerical simulation of the transition from the given initial data to the stable state $(\widehat{u}_6, \widehat{v}_6)$ of Figure \ref{fig3} (iii). $\sigma=0.32$, the initial function $(u_0, v_0)$ is the second-order approximation of (\ref{pattern3}) with $j=4$ and $a(4)$ replaced by $1.2 a(4)$.}
\label{fig4}
\end{figure}

\section{Conclusion}\label{sec5}
 In this work, we present the detailed information of steady states near the trivial equilibrium and the global structure of the steady states of (\ref{model223}). The second-order approximation of non-constant steady states is also derived. Then by using the standard linear stability analysis and analytical technique, we further establish the stability criterion of the bifurcation branches. It is  shown that the growth rate of bacteria $\sigma$ substantially influences the dynamical behavior of the model (\ref{model}). Particularly, under conditions (\ref{con1}) and (\ref{con4}) on the motility function $r(v)$, when the growth rate of bacteria is sufficiently large (i.e, for $\sigma>\sigma_c$), the system will keep stabilization around the uniform state $(1, 1)$; while for $\sigma \in (0, \sigma_a)$, the pattern formation must occur, and thus the densities of bacteria and AHL always depend on their location. The principal wave mode of stationary pattern coincides with the admissible one which maximizes bifurcation values. The analytical results are corroborated by direct simulations of the underlying system (\ref{model223}) through different stages.

There are various interesting questions arising from our present analytical and numerical studies. The existence of non-constant steady states and the propagation of pattern in a large domain have been rigorously discussed and organized as a separate paper. It should be noticed that the question whether the pattern formation occurs
in the cases when $\sigma\in [\sigma_{i_a}, \sigma_c]$ or $\sigma=0$ has not been discussed yet. This question may be
investigated by applying the approaches similar to that in \cite{m3, m1}. The global attractivity of non-constant steady state with the admissible wave mode still remains open. The bifurcating and emerging process in the pattern formation are numerically presented in Figures \ref{fig3} and \ref{fig4}, but the mathematical behavior of the merging process is not well understood yet. All these questions are very interesting and challenging. We think it is worthwhile to explore theories and methods of solving them in the future.

\bigbreak
\noindent \textbf{Acknowledgement}.
This work was supported  by the National Natural Science Foundation of China (No.~11671359) and the joint mobility project of the National Natural Science Foundation of China and Academy of Finland (No.~11811530145).  M. Ma gratefully acknowledges the Department of Mathematics and Statistics at University of Turku for its hospitality and the help of its members during her stay there and the financial support of Academy of Finland. M. Vuorinen's  research at the Zhejiang Sci-tech University in April 2018  was supported by the National Natural Science Foundation of China (No.~11811530145) of Prof M. Ma.



\centerline{\small FILE: \jobname.tex}

\end{document}